\date{December 26, 2024}
  \DeclareSymbolFontAlphabet{\mathbb}{AMSb}
  \DeclareSymbolFontAlphabet{\mathbbl}{bbold}
  \DeclareMathSymbol{\bbepsilon}{\mathord}{bbold}{"0F}
\title{The total absolute curvature of submanifolds with singularities}
\author[Y.~Yamauchi]{Yuta Yamauchi}
\address{%
   Graduate School of Engineering Science, 
   Yokohama National University, 
   Hodogaya, Yokohama 240-8501, Japan
}
\email{yamauchi-yuta-hj@ynu.jp}
\subjclass[2020]{%
Primary 53C40; 
Secondary 53C65, 
53C42, 
57R45. 
}
\keywords{%
Chern-Lashof theorem,
total absolute curvature,
frontal,
singular point%
}
\theoremstyle{plain}
 \newtheorem{theorem}{Theorem}[section]
 \newtheorem{introtheorem}{Theorem}
 \newtheorem{proposition}[theorem]{Proposition}
 \newtheorem{fact}[theorem]{Fact}
 \newtheorem*{fact*}{Fact}
 \newtheorem{lemma}[theorem]{Lemma}
 \newtheorem{corollary}[theorem]{Corollary}
 \theoremstyle{remark}
 \newtheorem{definition}[theorem]{Definition}
 \newtheorem{remark}[theorem]{Remark}
 \newtheorem*{acknowledgements}{Acknowledgements}
 \newtheorem{example}[theorem]{Example}
\numberwithin{equation}{section}
\newcommand{\bmath}[1]{\boldsymbol #1}
    \newcommand{\vect}[1]{\bmath{#1}}
\newcommand{\R}{{\boldsymbol R}}
\begin{document}

\begin{abstract}    
In this paper, we give a generalization of 
the Chern-Lashof theorem for submanifolds with singularities called frontals in Euclidean space.
We prove that, for an $n$-dimensional admissible compact frontal in $(n+r)$-dimensional Euclidean space $\R^{n+r}$,
its total absolute curvature is greater than or equal to the sum of the Betti numbers.
Furthermore, 
if the total absolute curvature is equal to $2$, and all singularities are of the first kind, then the image of the frontal coincides with a closed convex domain of an affine $n$-dimensional subspace of $\R^{n+r}$. 
\end{abstract}

\maketitle


\section{Introduction}

We fix positive integers $n$ and $r$.
We consider an oriented compact manifold $M^n$ of dimension $n$ 
and an immersion $f:M^n \to \R^{n+r}$ into $(n+r)$-dimensional Euclidean space $\R^{n+r}$.
We denote by $B$ the bundle of unit normal vectors, 
and by $G$ the Lipschitz-Killing curvature of $f$.
Then,
$$
\tau (M^n,f) = \frac{1}{\operatorname{vol}(S^{n+r-1})} \int_B |G|\, d \mu_B
$$
is called the
{\it total absolute curvature} of $f$, where $\operatorname{vol}(S^{n+r-1})$ denotes the volume of an $(n+r-1)$-dimensional unit sphere and $d \mu_B$ is the volume element of $B$.
The following holds:

\medskip

\begin{fact}[\cite{CL1,CL2}]\label{fa:CL}
Let $M^n$ be an oriented compact manifold of dimension $n$,
and let $f:M^n \to \R^{n+r}$ be an immersion into $(n+r)$-dimensional Euclidean space $\R^{n+r}$.
\begin{itemize}
\item[$(1)$]  
Let $b_i (M^n)$ be the $i$-th Betti number $(0 \leq i \leq n)$ of $M^n$.
Then, the total absolute curvature $\tau (M^n,f)$ of $f$ satisfies the inequality
$$
\tau(M^n,f) \geq \sum_{i=0}^n b_i (M^n).
$$
\item[$(2)$]  
If the total absolute curvature $\tau(M^n,f)$ is less than $3$, then $M^n$ is homeomorphic to an $n$-dimensional sphere.
\item[$(3)$]  
If the total absolute curvature $\tau(M^n,f)$ is equal to $2$,
then the image $f(M^n)$ is contained in an $(n+1)$-dimensional affine subspace of $\R^{n+r}$, 
and is embedded as a convex hypersurface. The converse of this is also true.
\end{itemize}
\end{fact}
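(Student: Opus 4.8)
The plan is to reinterpret the total absolute curvature integral-geometrically as an average count of critical points of height functions, and then feed this count into Morse theory. First I would introduce the generalized Gauss map $\tilde\nu : B \to S^{n+r-1}$ sending a unit normal $(p,\nu)$ to $\nu$, regarded as a point of the unit sphere; since $\dim B = n + r - 1 = \dim S^{n+r-1}$, this is a map between equidimensional manifolds. The essential computation is that the Lipschitz-Killing curvature $G$ at $(p,\nu)$ equals, up to sign, the Jacobian determinant of $\tilde\nu$ — this is because $G$ is the determinant of the shape operator $A_\nu$ and the differential of $\tilde\nu$ is governed by $A_\nu$ on the horizontal part. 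Hence $|G|\, d\mu_B$ coincides with $|\tilde\nu^*(d\sigma)|$, where $d\sigma$ is the volume form of $S^{n+r-1}$, and the area (coarea) formula yields
\[
\int_B |G|\, d\mu_B = \int_{S^{n+r-1}} \#\tilde\nu^{-1}(v)\, d\sigma(v).
\]

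Next I would identify the fiber count. A point $(p,v) \in \tilde\nu^{-1}(v)$ means exactly that $v$ is orthogonal to $df_p(T_pM^n)$, i.e.\ that $p$ is a critical point of the height function $h_v := \langle f(\cdot), v\rangle$. Thus $\#\tilde\nu^{-1}(v)$ is the number of critical points of $h_v$, and
\[
\tau(M^n,f) = \frac{1}{\operatorname{vol}(S^{n+r-1})} \int_{S^{n+r-1}} \#\{\text{critical points of } h_v\}\, d\sigma(v)
\]
is the average number of critical points of the family of height functions. By Sard's theorem applied to $\tilde\nu$, for almost every $v$ the point $v$ is a regular value, which is equivalent to $h_v$ being a Morse function (nondegeneracy of the Hessian of $h_v$ at a critical point $p$ corresponds to $\det A_v \neq 0$, i.e.\ $G \neq 0$ there). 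For such $v$ the weak Morse inequalities give $\#\{\text{critical points of } h_v\} \geq \sum_{i=0}^n b_i(M^n)$. Integrating over $S^{n+r-1}$ and dividing by $\operatorname{vol}(S^{n+r-1})$ proves $(1)$.

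For $(2)$, I would use that on a compact manifold every Morse height function has at least a maximum and a minimum, so $\#\{\text{critical points of } h_v\} \geq 2$ for almost every $v$; since this count is integer-valued, the hypothesis $\tau < 3$ forces a positive-measure set of directions $v$ for which $h_v$ has exactly two critical points, and Reeb's theorem then shows that $M^n$ is homeomorphic to the $n$-sphere. For $(3)$, the hypothesis $\tau = 2$ sharpens this to $\#\{\text{critical points of } h_v\} = 2$ for almost every $v$, i.e.\ almost every height function has a single maximum and a single minimum. This is the extremal (tight) case: I would argue that this property means that almost every hyperplane cuts $M^n$ into at most two pieces, show via a convex-hull / support-function argument that $f(M^n)$ bounds its convex hull, and deduce that the image is an embedded convex hypersurface lying in an affine $(n+1)$-dimensional subspace of $\R^{n+r}$. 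The converse — that a convex hypersurface in an affine $(n+1)$-subspace has $\tau = 2$ — follows by a direct computation, since its Gauss map covers a hemisphere exactly once.

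The main obstacle is the equality analysis in $(3)$: converting the measure-theoretic statement ``almost every height function has exactly two critical points'' into the rigid geometric conclusion that the image is a convex hypersurface contained in an affine subspace. This requires controlling the exceptional set of directions and relating the two-piece (tightness) property to genuine convexity, which is considerably more delicate than the Morse-inequality estimate underlying $(1)$.
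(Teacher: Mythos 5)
The paper does not prove this statement: it is imported as a Fact from Chern--Lashof \cite{CL1,CL2}, and the only traces of its proof that appear in the paper are the two ingredients reused in Section \ref{sec:proof}, namely the identity equating $\int_B |G|\, d\mu_B$ with the integral over $S^{n+r-1}$ of the number of critical points of height functions (quoted from the proof of Theorem $1$ in \cite{CL1}) and the characterization of Morse critical points as regular values of the canonical Gauss map $\nu$ (quoted from \cite{CL2}). Your proposal reconstructs exactly that classical argument --- area formula for $\nu$, Sard, Morse inequalities for $(1)$, Reeb for $(2)$, and the tightness/convex-hull analysis for $(3)$ --- so it follows the same route as the cited proof (and as the paper's own adaptation of it to frontals); the one caveat, which you flag yourself, is that your part $(3)$ remains a plan rather than a proof: in \cite{CL1} the $(n+1)$-dimensional affine subspace is extracted by showing $f(M^n)=\partial\bigl(\operatorname{conv} f(M^n)\bigr)$ and then counting dimensions, a step your sketch asserts but does not carry out.
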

\medskip

The Chern-Lashof theorem (Fact~\ref{fa:CL}) can be understood as a generalization of Fenchel's theorem \cite{Fenchel1929, Fenchel1951}
to compact Euclidean submanifolds of any dimension.
So far,  generalizations of the Chern-Lashof theorem for several ambient spaces have been obtained:
for submanifolds in Riemannian manifolds of non-positive curvature \cite{WS, BYC1}; 
for submanifolds in spaces of constant curvature \cite{BYC2, Teu1, Teu2, Teu3}; 
for non-closed submanifolds in Euclidean space \cite{Win}; 
for knotted surfaces \cite{KM}; 
for equiaffine immersions \cite{Koi1};
for submanifolds in simply connected symmetric spaces of non-positive curvature \cite{Koi2} and in symmetric spaces of compact type \cite{Koi3};
for spacelike submanifolds in Lorentz-Minkowski space \cite{Izu}; 
for complex submanifolds in complex projective space \cite{Hoi}.

 On the other hand, as a generalization of immersed submanifolds, 
classes of submanifolds with singularities, known as {\it wave fronts} or {\it frontals}, 
have been intensively investigated in recent years
\cite{Honda, HNSUY, HS, HTY, Ishi, KS, SUY-f, SUY, USY}.
Kossowski and Scherfner \cite{KS} obtained a Chern-Lashof type theorem 
for $2$-dimensional wave fronts in $\R^3$, 
which can be seen as an intrinsic generalization of the Chern-Lashof theorem,
see Fact~\ref{fa:ks}.
However, there are examples of frontals that are not wave fronts 
and play an important role in the study of total absolute curvature (cf. Example~\ref{ex:surface-2}).
Therefore, it is natural to ask whether
the Chern-Lashof theorem holds for frontals 
rather than wave fronts, and whether such a theorem can be extended to arbitrary dimensions and codimensions.
The relationship between minimal total absolute curvature and convexity of the image also remains unclear.

In this paper, 
we prove the following Chern-Lashof type theorem for admissible frontals.
For the definitions of admissible frontals and their total absolute curvature, 
see Definitions~\ref{def:admissible} and \ref{def:TAC}.

\medskip

\begin{introtheorem}
\label{thm:intro1}
Let $M^n$ be a compact oriented manifold of dimension $n$,
and let $f:M^n \to \R^{n+r}$ be a co-orientable admissible frontal.

\begin{itemize}
\item[$(1)$] 
Let $b_i (M^n)$ be the $i$-th Betti number of $M^n$ $(0 \leq i \leq n)$.
Then, the total absolute curvature $\tau (M^n,f)$ of the frontal $f$ satisfies the inequality
$$
\tau(M^n,f) \geq \sum_{i=0}^n b_i (M^n).
$$
\item[$(2)$]  
If the total absolute curvature $\tau(M^n,f)$ is less than $3$, then $M^n$ is homeomorphic to an $n$-dimensional sphere.
\item[$(3)$]  
If the total absolute curvature $\tau(M^n,f)$ is equal to $2$, 
then the image $f(M^n)$ is contained in an $(n+1)$-dimensional affine subspace of $\R^{n+r}$. 
\end{itemize}
\end{introtheorem}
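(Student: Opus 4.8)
The plan is to adapt the integral-geometric proof of the Chern--Lashof theorem (Fact \ref{fa:CL}), replacing the Gauss map of an immersion by the normal map of the frontal and isolating the contribution of the singular set. Since $f$ is a co-orientable frontal it carries a smooth field of limiting tangent $n$-planes defined on all of $M^n$, so the unit normal sphere bundle $B$ (with fibre $S^{r-1}$) and the normal map $\nu\colon B\to S^{n+r-1}$, $(p,w)\mapsto w$, are smooth even across the singular set $S(f)$. First I would record that the Lipschitz--Killing curvature $G$ is, up to sign, the Jacobian of $\nu$, whence $|G|\,d\mu_B=|\nu^{*}d\sigma|$ for the standard volume form $d\sigma$ of $S^{n+r-1}$; the area (change-of-variables) formula then yields
$$
\tau(M^n,f)=\frac{1}{\operatorname{vol}(S^{n+r-1})}\int_{S^{n+r-1}}\#\,\nu^{-1}(v)\,d\sigma(v),
$$
the integrand being finite for almost every $v$ by Sard's theorem.

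Next I would identify $\#\nu^{-1}(v)$ with a count of critical points of the height function $h_v:=\langle f,\,v\rangle$ on $M^n$. A point of $\nu^{-1}(v)$ is a $p\in M^n$ with $v$ orthogonal to the limiting tangent plane at $p$; at a regular point of $f$ this is exactly the condition that $p$ be a critical point of $h_v$, so the two notions of criticality differ only over $S(f)$, where $v\perp df_p(T_pM^n)$ no longer forces $v$ to annihilate the whole limiting plane. The crux is to show, using admissibility, that the set of directions $v$ for which $h_v$ has a critical point on $S(f)$ has measure zero, and that for the complementary full-measure set $h_v$ is a Morse function on $M^n$ whose critical points all lie at regular points and correspond bijectively to $\nu^{-1}(v)$. \emph{This genericity statement is the step I expect to be the main obstacle}: at a singular point $h_v$ is automatically degenerate along $S(f)$, so one must verify, via the local normal forms of admissible singularities, that this degeneracy is confined to a null set of directions (already in the first-kind model the offending directions form a lower-dimensional subset of $S^{n+r-1}$). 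Granting it, $\#\nu^{-1}(v)=\#\operatorname{Crit}(h_v)\ge\sum_{i}b_i(M^n)$ for almost every $v$ by the Morse inequalities, and integrating over $S^{n+r-1}$ proves (1).

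For (2), suppose $\tau(M^n,f)<3$. For almost every $v$ the integer $\#\operatorname{Crit}(h_v)$ is at least $\sum_i b_i(M^n)\ge 2$, so if the average were to stay below $3$ there must be a set of directions of positive measure along which $h_v$ is Morse with exactly two critical points, a single maximum and a single minimum. Reeb's theorem then gives that $M^n$ is homeomorphic to $S^n$.

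Finally, for (3) assume $\tau(M^n,f)=2$. By (2) we have $M^n\cong S^n$, and now $h_v$ has exactly one maximum and one minimum for almost every $v$; equivalently $f$ is tight. I would argue that tightness places no value $f(p)$ in the interior of the convex body $K=\operatorname{conv} f(M^n)$ (an interior value would let one produce a direction with a third critical point of $h_v$), so that $f(M^n)$ coincides with the relative boundary $\partial K$, which is homeomorphic to $S^{d-1}$ for $d=\dim\operatorname{aff}(K)$. Since $f$ possesses regular points, where $df$ has rank $n$, its image is $n$-dimensional; matching this with $\partial K\cong S^{d-1}$ forces $d-1=n$, i.e.\ $f(M^n)$ lies in an affine $(n+1)$-dimensional subspace of $\R^{n+r}$. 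The points needing care here are again the behaviour of the singular values, namely that they too lie on $\partial K$, which follows from admissibility.
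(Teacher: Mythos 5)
Your central genericity step is not just ``the main obstacle'' --- it is false, and as a consequence your setup for part (1) proves the wrong statement. For a frontal, the set of directions $\vect{w}$ for which $h_{\vect{w}}$ has a critical point on the singular set does \emph{not} have measure zero: at a singular point $p$ of the first kind, $p$ is critical for $h_{\vect{w}}$ whenever $\vect{w}\perp df_p(T_pM^n)$, and since $df_p(T_pM^n)$ is only $(n-1)$-dimensional these pairs $(p,\vect{w})$ sweep out an $(n+r-1)$-dimensional family, i.e.\ a positive-measure set of directions. Such critical points are moreover generically \emph{nondegenerate} (your remark that $h_v$ is ``automatically degenerate along $S(f)$'' is mistaken; Morseness of the Hessian does not require $df_p$ to have full rank --- this is exactly Lemma \ref{le:first-morse}: a first-kind critical point is Morse iff $\vect{w}$ is a regular value of $\bar\nu$ and $\vect{w}\not\perp\Pi(p)$). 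Admissibility only removes the \emph{second-kind} points for a.e.\ direction (Lemmas \ref{le:bundle-measure} and \ref{le:second-zero}). Relatedly, your identity $\tau(M^n,f)=\frac{1}{\operatorname{vol}(S^{n+r-1})}\int_{S^{n+r-1}}\#\nu^{-1}(v)\,d\sigma(v)$ accounts only for the first summand of Definition \ref{def:TAC} and omits the term $\frac{1}{\operatorname{vol}(S^{n+r-1})}\int_{\bar B}|\bar G|\,d\mu_{\bar B}$ coming from the restriction $\bar f=f|_{\Sigma_f}$. With that term dropped, inequality (1) is simply false: for the flat map of Example \ref{ex:surface-2} one has $\int_B|G|\,d\mu_B=0$ while $\sum_i b_i(S^n)=2$, and for a.e.\ $v$ \emph{all} critical points of $h_v$ lie on $\Sigma_f$ (Example \ref{ex:surface-1} similarly makes the regular-part integral arbitrarily small). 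The paper's proof instead splits, for $\vect{w}\notin Q$, the Morse count as $\#\operatorname{crit}(h_{\vect{w}},M^n)=\#\operatorname{crit}(h_{\vect{w}},M^n_{\rm reg})+\#\operatorname{crit}(h_{\vect{w}},\Sigma_f\setminus C_f)$ and converts the two counts separately into $\int_B|G|\,d\mu_B$ and $\int_{\bar B}|\bar G|\,d\mu_{\bar B}$: the first-kind critical points you are trying to exclude are precisely what the second term in the definition of $\tau$ is designed to measure.

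Your part (2) is essentially the paper's argument (a positive-measure set of directions with exactly two Morse critical points, then Reeb) and is fine once (1) is repaired. Your part (3), however, again collides with Example \ref{ex:surface-2}: there $\tau=2$ but $f(M^n)$ equals the convex body $K$ itself (affine dimension $n$), not its relative boundary $\partial K\cong S^{d-1}$, so the conclusion ``tightness forces $f(M^n)=\partial K$, hence $d-1=n$'' is false for frontals. The step that fails is the production of a third critical point from a relative-interior value: that construction needs a \emph{regular} point of $f$ at the top set, and at singular points the height function need not be critical at all (in the flat example a continuum of points maps into the relative interior of $K$ while generic height functions still have only two critical points). The paper's route for (3) is a case split: if $G(p,\xi)\neq 0$ somewhere on $B$, the original Chern--Lashof argument applies; if $G\equiv 0$ on $B$, then $\bar f$ carries the full curvature $\frac{1}{\operatorname{vol}(S^{n+r-1})}\int_{\bar B}|\bar G|\,d\mu_{\bar B}=2$, the Chern--Lashof argument applied to $\bar f$ places $f(\Sigma_f)$ in a hyperplane $\mathcal{P}^{n+r-1}$, and a maximum argument (a maximum of $h_{\vect{b}}$ attained in $M^n_{\rm reg}$ for $\vect{b}$ close to the normal of $\mathcal{P}^{n+r-1}$ would be a Morse critical point, forcing $G\neq 0$) shows $f(M^n_{\rm reg})\subset\mathcal{P}^{n+r-1}$ as well; one then induces downward in codimension. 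Note the theorem claims only containment in an $(n+1)$-dimensional affine subspace, with no boundary-sphere structure; the finer convexity statement requires the first-kind hypothesis and is Theorem \ref{thm:intro2}.
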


\medskip
  
We remark that the assertion $(3)$ in Theorem~\ref{thm:intro1} does not assert any convexity property.
If we restrict the type of singularities to the first kind (see Definition~\ref{def:sing-kind}), 
we obtain the following relationship between minimal total absolute curvature and convexity of the image.

\medskip

\begin{introtheorem}
\label{thm:intro2}
Let $M^n$ be a compact oriented manifold of dimension $n$,
and let $f:M^n \to \R^{n+r}$ be a co-orientable frontal.
Suppose that the singular set $\Sigma_f$ is not empty and consists of singular points of the first kind.
Then, the total absolute curvature $\tau(M^n,f)$ is equal to $2$ if and only if 
the frontal $f$ satisfies the following three conditions.
\begin{itemize}
\item[\rm{ (a)}] 
The manifold $M^n$ and the singular set $\Sigma_f$ are homeomorphic to an $n$-dimensional sphere and an $(n-1)$-dimensional sphere, respectively,
\item[\rm{ (b)}] 
the image $f(M^n)$ is a closed convex domain contained in an $n$-dimensional affine subspace of $\R^{n+r}$, and
\item[\rm{ (c)}] 
the image $f(\Sigma_f)$ coincides with the boundary of $f(M^n)$.
\end{itemize}
\end{introtheorem}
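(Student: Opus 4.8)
My plan is to prove both implications through the interpretation, underlying Theorem~\ref{thm:intro1}, of the total absolute curvature as the average over unit directions $a$ of the number $\mu(a)$ of critical points of the height function $h_a=\langle f,a\rangle$, together with the induced splitting $\tau=2I_{\mathrm{reg}}+\tau_{\mathrm{sing}}$ of $\operatorname{avg}_a\mu(a)$ coming from the decomposition of $B$ in Definition~\ref{def:TAC}, where $I_{\mathrm{reg}}=\operatorname{avg}_a\#\{p\ \text{regular}:\nu_p=a\}$ records the regular critical points and $\tau_{\mathrm{sing}}=\operatorname{avg}_a\#\{q\in\Sigma_f:a\perp\operatorname{Image}df_q\}$ the singular ones.

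For sufficiency I assume (a)--(c) and translate so that $D:=f(M^n)$ is a full-dimensional closed convex domain in $\R^n\subset\R^{n+r}$, the map folding $M^n\cong S^n$ onto $D$ along $f(\Sigma_f)=\partial D$. Writing $a=a'+a''$ with $a'\in\R^n$ and $a''\perp\R^n$, one has $h_a=h_{a'}$; at each regular point $df$ is onto $\R^n$, so for $a'\ne0$ the function $h_{a'}$ has no regular critical point, and its only critical points are the two boundary points of $D$ whose outward normal is $\pm a'/|a'|$, both in $f(\Sigma_f)$. Hence $\mu(a)=2$ off the measure-zero set $\{a'=0\}$, which yields $\tau(M^n,f)=2$.

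For necessity I assume $\tau=2$. Theorem~\ref{thm:intro1}(2) gives $M^n\cong S^n$, so equality holds in Theorem~\ref{thm:intro1}(1) and, for almost every $a$, $h_a$ has exactly two critical points, namely its global maximum and minimum; Theorem~\ref{thm:intro1}(3) confines $f(M^n)$ to an affine $\R^{n+1}$. The main step is to descend to a hyperplane. Since the singularities are of the first kind, near each $q\in\Sigma_f$ the image is a cusp that is one-sided in the missing tangent direction $w_q\in T_q\ominus\operatorname{Image}df_q$ of Definition~\ref{def:sing-kind}; by tightness this local support is global, so $w_q$ is an outward normal of $K:=\operatorname{conv}f(M^n)$ at $f(q)$ and the normal cone of $K$ there is the single ray $\R_{\ge0}w_q$. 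Consequently, were $K$ full-dimensional in $\R^{n+1}$, then for almost every $a$ both the maximum and the minimum of $h_a$ would be attained at regular support points, forcing $\#\{\text{regular critical points}\}\ge2$ and thus $I_{\mathrm{reg}}\ge1$; as each first-kind point contributes positively to the singular curvature integral we also have $\tau_{\mathrm{sing}}>0$, whence $\tau=2I_{\mathrm{reg}}+\tau_{\mathrm{sing}}>2$, a contradiction. Therefore $K$ is degenerate and $f(M^n)\subset\R^n$.

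I expect this flatness step to be the main obstacle, the delicate point being the local-to-global transition: extracting from the first-kind normal form that the cusp is one-sided in $w_q$ (so the normal cone is a ray and regular support points are generic) while that same point forces $\tau_{\mathrm{sing}}>0$. Granting it, the remainder is convex geometry. Once $f(M^n)\subset\R^n$, every regular tangent plane equals $\R^n$, so the affine hull is exactly $n$-dimensional and $K'=\operatorname{conv}f(M^n)$ is a full-dimensional convex domain; for in-plane directions $a'$ the function $h_{a'}$ has no regular critical point, so its extrema lie on $f(\Sigma_f)$, giving $f(\Sigma_f)=\partial K'$ and $f(M^n)=K'$, which are (b) and (c). Finally, as $\Sigma_f$ separates $S^n$ into two sheets each mapped homeomorphically onto $K'\cong D^n$, I conclude $\Sigma_f\cong\partial D^n\cong S^{n-1}$, completing (a).
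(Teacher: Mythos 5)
Your sufficiency direction is essentially sound (and close to the paper's, which simply notes $G\equiv 0$ on the regular part because the image lies in an $n$-plane and quotes Fact~\ref{fa:CL}-(3) for $\bar f$), but the necessity direction has a genuine gap exactly at the step you yourself flagged: the flatness reduction from $\R^{n+1}$ to $\R^n$. Your argument needs the set of directions $a$ whose maximum/minimum is attained at a singular point to have measure zero, and you derive this from the claim that at each first-kind point $q$ the normal cone of $K=\operatorname{conv}f(M^n)$ at $f(q)$ is the single ray $\R_{\ge 0}w_q$. That claim does not follow from local one-sidedness plus tightness: already for the model cuspidal edge $(u,t^2,t^3)$ the locally supporting directions at the edge point form a half-plane, not a ray (every normal $(0,a,b)$ with $a>0$ supports locally), and at a hull-boundary singular point the normal cone is generically a $2$-dimensional sector (as at the edge of a lens-shaped body). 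Moreover, the unit normal bundle $\bar B$ of $\bar f=f|_{\Sigma_f}$ in $\R^{n+1}$ has dimension $(n-1)+1=n=\dim S^n$, so the directions supported at singular points can perfectly well have positive measure---indeed $\tau_{\mathrm{sing}}>0$ precisely because $\bar\nu(\bar B)$ has positive measure---and so ``for almost every $a$ both extrema are regular'' does not follow, and your contradiction $\tau=2I_{\mathrm{reg}}+\tau_{\mathrm{sing}}>2$ collapses. The paper's decisive input, which your route never uses, is that first-kind singularities make $\bar f$ an \emph{immersion} of a compact $(n-1)$-manifold, so the classical Chern--Lashof inequality (Fact~\ref{fa:CL}-(1)) gives $\tau_{\mathrm{sing}}=\tau(\Sigma_f,\bar f)\ge 2$; combined with your own splitting $\tau=\tau_{\mathrm{reg}}+\tau_{\mathrm{sing}}=2$ this forces $\tau_{\mathrm{reg}}=0$ and $\tau(\Sigma_f,\bar f)=2$, whereupon Fact~\ref{fa:CL}-(2),(3) immediately yields $\Sigma_f\cong S^{n-1}$ and $f(\Sigma_f)$ embedded as a convex hypersurface in an $n$-dimensional affine subspace $\mathcal{P}^n$; the regular image is then pushed into $\mathcal{P}^n$ by the $G\equiv 0$ maximization argument from the proof of Theorem~\ref{thm:intro1}-(3). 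No normal-cone analysis in $\R^{n+1}$ is needed, and none seems available at your level of generality.

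There is also a secondary gap in your endgame. ``For in-plane directions the extrema of $h_{a'}$ lie on $f(\Sigma_f)$'' only shows that exposed points of $K'$ belong to $f(\Sigma_f)$; it shows neither $f(\Sigma_f)\subseteq\partial K'$ (a priori a first-kind point could map into the relative interior, and excluding this needs the two-critical-point property or a separate argument) nor that $f(M^n)$ fills all of $K'$. The paper spends real effort here: first a distance-function maximum argument showing $f(M^n)\subseteq\Omega$ (a maximizer of the distance to $\Omega$ can be neither singular, since singular images lie on $\partial\Omega$, nor regular, since the regular part maps locally diffeomorphically into $\mathcal{P}^n$ where the distance has no critical points), and then Lemma~\ref{le:convex-domain}, a second-derivative computation at a singular point closest to a hypothetical missed point $x\in\Omega\setminus f(M^n)$: minimality of the distance forces $f_{nn}(p_m)\cdot\vect{y}<0$ while the supporting hyperplane of the convex domain forces $f_{nn}(p_m)\cdot\vect{y}>0$, using $f_{nn}(p_m)\neq\vect{0}$, which is exactly where non-degeneracy of first-kind singularities does quantitative work. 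You should supply an argument of this type before asserting (b) and (c).
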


\medskip

Since the frontals satisfying the conditions of Theorem~\ref{thm:intro2} are not wave fronts, 
the generalization of 
the Chern-Lashof theorem to frontals (Theorem~\ref{thm:intro1}) is essential for proving Theorem~\ref{thm:intro2}.
In addition, it is interesting that the dimension of the affine subspace containing the image drops to $n$
 (cf.\ Remark~\ref{rem:n-subspace}).

This paper is organized as follows.
In Section~\ref{sec:frontal}, we describe some basic materials of frontals and define admissible frontals
and their total absolute curvature. In Section~\ref{sec:proof}, we prove Theorems A and B.
Finally, in Section~\ref{sec:low-dimensional}, we apply our results to the $1$- and $2$-dimensional cases.

\section{Admissible frontals and the total absolute curvature}
\label{sec:frontal}
In this section,  we define admissible frontals (Definition~\ref{def:admissible}) and 
the total absolute curvature for them (Definition~\ref{def:TAC}).

\subsection{Frontals and singularities}\label{sub:frontals}
In this subsection,
we review the definition of frontals in $(n+r)$-dimensional Euclidean space $\R^{n+r}$.
The notion of frontals as generalized submanifolds is introduced in \cite{Ishi}.
The cases of surfaces and hypersurfaces are well investigated, see \cite{SUY-f, USY} for example.

Let $M^n$ be an oriented $n$-manifold.
For a smooth map $f:M^n \to \R^{n+r}$,
a point $p \in M^n$ is called a {\it singular point} of $f$ 
if $f$ is not an immersion at $p$.
Otherwise, $p$ is called a {\it regular point}.
Let $\Sigma_f$ be the singular set of $f$, 
and define $M^n_{\rm{reg}}$ by $M^n_{\rm{reg}} := M^n \setminus \Sigma_f$.
We denote by $\widetilde{Gr}(n,n+r)$ the Grassmannian of oriented $n$-dimensional subspaces of $\R^{n+r}$.
A smooth map $f:M^n \to \R^{n+r}$ such that $M^n_{\rm{reg}}$ is dense in $M^n$ is called a {\it frontal\/} 
 if, for each $ p \in M^n$, there exist an open neighborhood $U$ and
a smooth map $\Pi: U \to \widetilde{Gr}(n,n+r)$ such that
$$
df_q (X) \in \Pi(q) \hspace{6mm} ( q \in U, \,  X \in T_q M^n).
$$
This map $\Pi$ is called the {\it generalized Gauss map} of $f$.
If $\Pi$ can be defined globally on $M^n$, then $f$ is said to be {\it co-orientable}.
If $f$ is an immersion, then $f$ is co-orientable.
This can be verified by setting $\Pi(q) := df_q (T_q M^n)\,\, (q \in M^n)$.
By taking the double cover of $M^n$ if necessary, we may assume that frontals are co-orientable without loss of generality.
Therefore, in this paper, frontals are assumed to be co-orientable.

For the generalized Gauss map $\Pi$, we denote by  $\Pi^{\bot} (p)$ the orthogonal complement of $\Pi(p)$.
For a local coordinate system $(U; u_1,u_2, \cdots , u_n)$ of $M^n$, we define a function $\lambda$ on $U$ by
$$
\lambda(p)  := 
\det (f_1 (p) , f_2 (p) , \cdots , f_n (p), E_1 (p) , \cdots , E_{r}(p) )\quad (p \in U)
$$
where $f_i  :=  \partial f / \partial u_i$ and 
$\{ E_1 , \cdots , E_{r} \}$ is an orthonormal frame of $\Pi^{\bot}$.
The function $\lambda$ is called the {\it signed volume density function}.
Then, a point $p \in M^n$ is a singular point of $f$ if and only if $\lambda(p)$ is equal to $0$.
A singular point $p$ is said to be {\it non-degenerate} if the exterior derivative $d \lambda$ 
does not vanish at $p$.
Non-degeneracy of singular points does not depend on the choices of coordinates of the domain and an orthonormal frame 
$\{ E_1 , \cdots , E_{r} \}$ of $\Pi^{\bot}$.

\medskip
\begin{lemma}\label{le:rank}
If a singular point $p$ is non-degenerate, then the rank of $df_p$ is equal to $n-1$.
\end{lemma}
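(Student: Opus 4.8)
The plan is to prove the contrapositive. Since $p$ is a singular point we already know $\operatorname{rank} df_p \le n-1$, so it suffices to show that if $\operatorname{rank} df_p \le n-2$ then $d\lambda_p = 0$, i.e.\ $p$ is degenerate. The whole argument is local near $p$, and the first step is to translate the density function $\lambda$ into honest linear algebra. Working in a local coordinate system $(U;u_1,\dots,u_n)$, I would choose a smooth orthonormal frame $\{e_1,\dots,e_n\}$ of the $n$-plane field $\Pi$ on a neighborhood of $p$; this is possible because $f$ is co-orientable. By the frontal condition $f_i = df(\partial/\partial u_i) \in \Pi$, so one may write $f_i = \sum_{j=1}^n a_{ij}\, e_j$ with $a_{ij} = \langle f_i, e_j\rangle \in \Ci(U)$. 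The matrix $A = (a_{ij})$ represents $df$ in the frames $\{\partial/\partial u_i\}$ and $\{e_j\}$, so that $\operatorname{rank} df_q = \operatorname{rank} A(q)$ for every $q \in U$.

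Next I would record the identity $\lambda = \varepsilon\,\det A$ with $\varepsilon \equiv \pm 1$ constant on $U$. Indeed, since $\{e_1,\dots,e_n,E_1,\dots,E_r\}$ is an orthonormal basis of $\R^{n+r}$, expanding the columns of $\det(f_1,\dots,f_n,E_1,\dots,E_r)$ in this basis realizes $(f_1,\dots,f_n,E_1,\dots,E_r)$ as $P\,B$, where $P$ is the orthogonal matrix with columns $e_1,\dots,e_n,E_1,\dots,E_r$ and $B = \bigl(\begin{smallmatrix} A^{\top} & 0 \\ 0 & I_r\end{smallmatrix}\bigr)$. Hence $\lambda = \det P \cdot \det B = \varepsilon\,\det A$, where $\varepsilon = \det P$ is a continuous $\pm 1$-valued function, hence locally constant. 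Consequently $d\lambda_p = \varepsilon\, d(\det A)_p$, and all derivatives of the $E_k$ are absorbed into the constant $\varepsilon$, leaving no stray terms.

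The final step is the linear-algebra fact that the differential of the determinant is the adjugate: $d(\det A)_p = \sum_{i,j} C_{ij}(p)\,(da_{ij})_p$, where $C_{ij}$ is the $(i,j)$ cofactor of $A$, i.e.\ up to sign an $(n-1)\times(n-1)$ minor. If $\operatorname{rank} A(p) = \operatorname{rank} df_p \le n-2$, then every $(n-1)\times(n-1)$ minor of $A(p)$ vanishes, so all cofactors $C_{ij}(p)$ vanish and $d\lambda_p = \varepsilon\, d(\det A)_p = 0$. This proves the contrapositive, and combined with $\operatorname{rank} df_p \le n-1$ it yields $\operatorname{rank} df_p = n-1$ at a non-degenerate singular point. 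The argument is largely bookkeeping; the one point that makes it work is the vanishing of the adjugate in corank $\ge 2$, packaged together with the observation that $\lambda$ is, up to a locally constant sign, the determinant of the representation matrix of $df$ in a frame of $\Pi$. Independence of the coordinates and of the frame $\{E_1,\dots,E_r\}$ is not an issue, since the frame-independence of non-degeneracy has already been noted in the text.
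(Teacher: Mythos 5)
Your proof is correct, and at its core it runs on the same engine as the paper's: argue the contrapositive, and use the fact that the first derivative of a determinant vanishes at a matrix of corank at least $2$. The packaging differs in a way worth noting. The paper differentiates the full $(n+r)\times(n+r)$ determinant $\lambda = \det(f_1,\dots,f_n,E_1,\dots,E_r)$ column by column via the Leibniz rule: the terms in which some $E_j$ is differentiated vanish already because $f_1(p),\dots,f_n(p)$ are linearly dependent at any singular point, and the terms in which some $f_i$ is differentiated vanish when $\operatorname{rank} df_p \leq n-2$ because the remaining $n-1$ columns $f_j(p)$ are then dependent --- which is exactly your vanishing-of-cofactors statement in disguise. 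Your factorization $(f_1,\dots,f_n,E_1,\dots,E_r) = PB$ with $P$ orthogonal and $B$ block-diagonal, giving $\lambda = \varepsilon \det A$ with $\varepsilon = \det P$ locally constant, buys a cleaner treatment of the normal frame: all derivatives of the $E_j$ are absorbed into $\varepsilon$ at the outset, the problem reduces to Jacobi's formula for the $n\times n$ representation matrix $A$ of $df$, and the identification $\operatorname{rank} df_q = \operatorname{rank} A(q)$ becomes explicit. Two cosmetic remarks: local orthonormal frames of $\Pi$ exist by Gram--Schmidt on any local trivialization regardless of co-orientability (co-orientability only makes $\Pi$ global, which the paper assumes anyway, so your appeal to it is harmless but not needed), and you should shrink $U$ to be connected so that $\varepsilon = \det P$ is genuinely constant; both are trivial to arrange.
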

\medskip

\begin{proof}
Let $(U; u_1,u_2, \cdots , u_n)$ be a local coordinate neighborhood of $p$.
For each $X \in T_p M$, we have
\begin{multline}
d \lambda_p (X) = \sum_{i=1}^{n} \det (f_1 (p) ,  \cdots, (d f_i)_p (X), \cdots, f_n (p), E_1 (p) , \cdots , E_{r}(p) ) \notag \\
+ \sum_{j=1}^{r} \det (f_1 (p) ,  \cdots , f_n (p), E_1 (p) , \cdots ,(d E_j)_p (X) ,\cdots , E_{r}(p) ).
\end{multline}
Since $p$ is a singular point, the rank of $df_p$ is less than $n$.
So, 
$$
\det (f_1 (p) ,  \cdots , f_n (p), E_1 (p) , \cdots ,(d E_j)_p (X) ,\cdots , E_{r}(p) ) =0
$$
holds for $1 \leq j \leq r$.
If the rank of $df_p$ is less than $n-1$, then
$$
\det (f_1 (p) ,  \cdots, (d f_i)_p (X), \cdots, f_n (p), E_1 (p) , \cdots , E_{r}(p) ) =0
$$
holds for $1 \leq i \leq n$.
Thus,  the derivative $d \lambda_p (X)$ vanishes. 
This contradicts the non-degeneracy of the singular point $p$.
Therefore, the rank of $df_p$ is equal to $n-1$.
\end{proof}

The implicit function theorem yields that if singular points are all non-degenerate, then the singular set $\Sigma_f$ is a regular hypersurface of $M^n$.
By Lemma~\ref{le:rank}, the kernel $\ker df_p$ of $df_p$ is a $1$-dimensional subspace of $T_p M^n$ at a non-degenerate singular point $p$.
We call $\ker df_p$ the {\it null space} at $p$.

\medskip
\begin{definition}\label{def:sing-kind}
Suppose that all singular points of the frontal $f: M^n \to \R^{n+r}$ are non-degenerate.
A non-degenerate singular point 
$p$ is said to be of the {\it first kind} 
if the null space $\ker df_p$ is not contained in $T_p \Sigma_f$;
otherwise, $p$ is called the {\it second kind}.
\end{definition}
\medskip

We denote by $C_f\, (\subset \Sigma_f)$ the set of singular points of the second kind.

\medskip
\begin{definition}\label{def:admissible}
If a frontal $f:M^n \to \R^{n+r}$ satisfies the following conditions, $f$ is called an {\it admissible frontal}.
\begin{itemize}
\item[$(1)$] 
All singular points of $f$ are non-degenerate.
\item[$(2)$] 
There exists a regular hypersurface $\mathcal{H}$ in $\Sigma_f$ such that $C_f  \subset \mathcal{H}$.
\end{itemize}
\end{definition}
\medskip

We remark that a frontal
whose singular set consists of singular points of the first kind
is admissible (cf.\ Theorem~\ref{thm:intro2}).
Definition~\ref{def:admissible} is inspired by ``admissible singular points of the second kind'' in \cite{USY}  (cf.\ Remark~\ref{re:admissible} ).
By definition, if $f : M^n \to \R^{n+r}$ is an admissible frontal, then the set $\Sigma_f \setminus C_f$ of singular points of the first kind is dense in the singular set $\Sigma_f$.
This property is used in the definition of total absolute curvature (Definition~\ref{def:TAC}), in Proposition~\ref{prop:bar-f} and in Lemma~\ref{le:bundle-measure}. 
In the case of $n=2$, the condition ($2$) of Definition~\ref{def:admissible} is equivalent to the condition that
the set of singular points of the second kind $C_f$ is a discrete subset of $\Sigma_f$ (cf. \cite{KS}, Fact~\ref{fa:ks}).
Moreover, frontals with at most $A_k$-type singular points are admissible:

\medskip
\begin{example}\label{ex:A3sing}
Suppose that each singular point of the frontal $f: M^n \to \R^{n+1}$ is right-left-equivalent to the {\it  $A_{k+1}$-type singular point }$(1 \leq k \leq n)$.
Here, the  $A_{k+1}$-type singular point is the map-germ of the map from $\R^n$ to $\R^{n+1}$ defined by
$$
(t,x_2, \cdots , x_n) \mapsto 
\left(
(k+1) \sum_{j=2}^k (j-1)\,t^j x_j , -(k+2) t^{k+1}- \sum_{j=2}^k j \,t^{j-1} \, x_j, x_2, \cdots , x_n 
\right)
$$
at the origin of $\R^n$.
For an $A_{k+1}$-type singular point $p \in M^n$ and the neighborhood $U$ of $p$, 
we give the following notation:
$$
\lambda^{(0)} = \lambda, \quad \lambda^{(1)} = \eta\lambda, \quad \lambda^{(i)} = d \lambda^{(i-1)} (\eta) \quad (2 \leq i \leq k),
$$
where $\lambda$ is the signed volume density function of $f$ and $\eta$ is a non-vanishing vector field on $U$ such that $\eta(q) \in \ker df_q$ for each singular point $q \in \Sigma_f \cap U$.
Then, 
$$
\lambda(p) = \lambda'(p) = \cdots = \lambda^{(k-1)} (p) = 0, \quad \lambda^{(k)}  (p) \neq 0
$$
hold and the Jacobian matrix of 
the smooth map $ (\lambda , \lambda' , \cdots , \lambda^{(k-1)} ): U \to \R^{k}$
 is of rank $k$ at $p$
(\cite[ Corollary $2.5$]{SUY}).
In this case, all singular points of $f$ are non-degenerate.
We define the map $\psi:U \to \R^2$  by $\psi = (\lambda, \lambda') $.
Since a singular point $p \in U$ is of the second kind if and only if $\lambda(p) = \lambda'(p) = 0$, 
the set $C_f \cap U$ is given by $\psi^{-1} ((0,0))$.
Now, the Jacobian matrix of $\psi$ is of rank $2$ at each $p \in \psi^{-1} ((0,0))$.
Hence, $(0,0) \in \R^2$ is a regular value of $\psi$.
Therefore, $f$ is an admissible frontal by setting $\mathcal{H} = C_f$.
\end{example}
\medskip

For example, in the case of the map $f:\R^3 \to \R^4$ of the $A_{4}$-type singular point
$$
f(t,x,y) = ( 4 t^5 + t^2 x + 2 t^3 y , 5 t^4 + 2 t x + 3 t^2 y , x, y ),
$$
we obtain $\Sigma_f = \{ (t, -3ty- 10 t^3, y) \in \R^3 \mid (t,y) \in \R^2 \}$ and
$$
C_f = \{ (t, -3ty- 10 t^3, y) \in \Sigma_f \mid y = -10t^2 \} = \{ (t, 20t^3, -10t^2) \in \Sigma_f \mid t \in \R \}.
$$
Thus, $f$ is an admissible frontal (see Figure~\ref{fig:A3}).

\begin{figure}[htb]
\centering
 \begin{tabular}{c}
\resizebox{4cm}{!}{\includegraphics{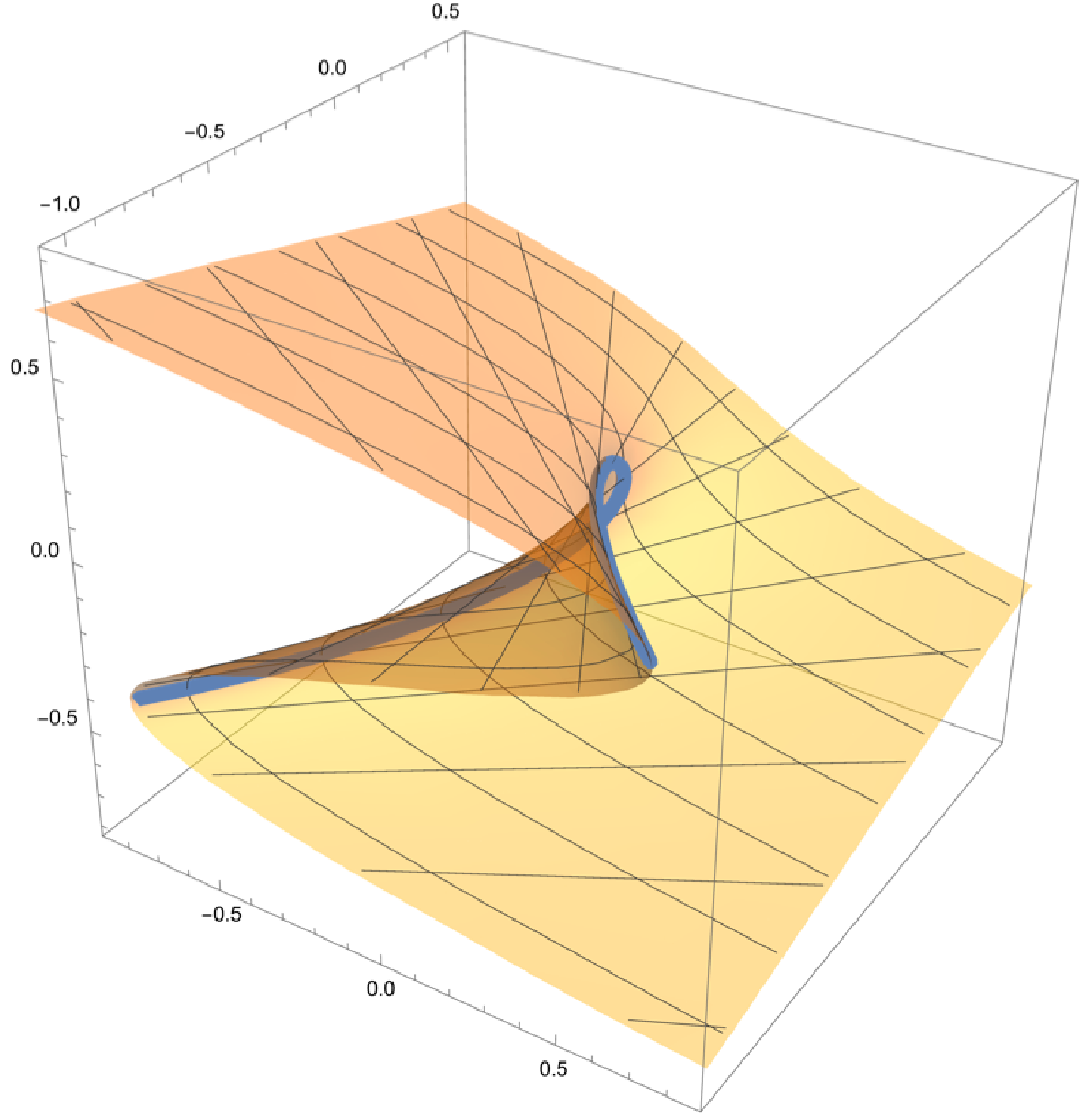}}
\end{tabular}  
\caption{The singular set $\Sigma_f$ (yellow surface) and the set of singular points of the second kind $C_f$ (blue curve) of the map-germ $f:\R^3 \to\R^4$ of the $A_{4}$-type singular point. 
Since $C_f$ is a regular curve in $\Sigma_f$, $f$ is an admissible frontal 
(cf.  Definition~\ref{def:admissible}, Example~\ref{ex:A3sing}).}
\label{fig:A3}
\end{figure}

\subsection{The total absolute curvature}
In this subsection, we review the notion of the Lipschitz-Killing curvature
and define the total absolute curvature for admissible frontals (Definition~\ref{def:TAC}).
 Let $f:M^n \to \R^{n+r}$ be a co-orientable frontal.
We define the unit normal bundle $B$ of $f$ by 
$$
B  :=  \bigcup_{p \in M^n} B_p \qquad \left( B_p := \{ \vect{v} \in \Pi^{\bot}(p) \mid \| \vect{v} \| = 1 \} \right).
$$
Here, $\| \vect{w} \| := \sqrt{\vect{w} \cdot \vect{w}}$ is 
the norm of a vector $\vect{w} \in \R^{n+r}$,
and $\vect{w} \cdot \vect{v}$ is the canonical Euclidean inner product of $\vect{w},\vect{v} \in \R^{n+r}$.
A smooth section of $B$ is called a {\it unit normal vector field} of $f$.
Let $N$ be a unit normal vector field of $f$.
On the regular set $M^n_{\rm{reg}}$, the following Weingarten formula holds:
\begin{equation}\label{eq:Weingarten}
 D_X N 
=
 -df(A_{N} X) + D^{\bot}_X N.
\end{equation}
Here, $X$ is a smooth vector field on $M^n$,
$D$ is the canonical connection of $\R^{n+r}$, $D^{\bot}$ is the normal connection, and  $A_{N}$ is the shape operator with respect to $N$.
 Define 
$B_{\rm{reg}}$
by
$$
B_{\rm{reg}} := \bigcup_{p \in M^n_{\rm{reg}}} B_p.
$$

For each $(p,\xi) \in B_{\rm{reg}}$, 
we define the {\it Lipschitz-Killing curvature} $G(p,\xi)$ by
$$
G(p,\xi)  :=  \det A_{\xi}. 
$$

We let $(U;u_1, \cdots , u_n)$ be a coordinate system of $M^n$.
The two $n$-forms 
$$
d\hat{V}  := \lambda\, du_1 \wedge \cdots \wedge du_n, 
\quad dV  := |\lambda| \, du_1  \wedge \cdots \wedge du_n 
$$
on $U$ are called the {\it signed volume element} and the ({\it unsigned}\/) {\it volume element} of $f$, respectively.
The signed volume element of $f$ does not depend on the choice of coordinate systems of $M^n$ or the choice of an orthonormal frame
compatible with the orientation of $\Pi^{\bot}$.
The (unsigned) volume element of $f$ also does not depend on the choice of coordinate systems
compatible with the orientation of $M^n$ and choice of an orthonormal frame
compatible with the orientation of $\Pi^{\bot}$.
Let $d \sigma$ be the volume element of each fiber of $B$.
The {\it signed volume element} and the ({\it unsigned}\/) {\it volume element} of $B$ are defined by
$$
d \hat{\mu}_B  :=  d \hat{V} \wedge d\sigma, 
\quad d \mu_B  :=  d V \wedge d\sigma,
$$
respectively.
Let
\[
S^{n+r-1}:=\{\vect v\in\R^{n+r}\mid \|\vect v\|=1\}
\]
be the unit sphere in $\R^{n+r}$.
We call the smooth map
$$\nu : B \to S^{n+r-1} \,; (p,\xi) \mapsto \xi
$$
 the {\it canonical Gauss map} of $f$. 
The following proposition is important for the definition of the total absolute curvature (Definition~\ref{def:TAC}).

\medskip

\begin{proposition}\label{prop:pullback}
 Let $f:M^n \to \R^{n+r}$ be a co-orientable frontal,
and let
$d \mu_{S^{n+r-1}}$ be the volume element of $S^{n+r-1}$.
On $B_{\rm{reg}}$, the pullback of $d \mu_{S^{n+r-1}}$ by $\nu$ can be written as
$$
\nu^* d \mu_{S^{n+r-1}} = (-1)^n G(p,\xi) \, d \hat{\mu}_{B}.
$$
\end{proposition}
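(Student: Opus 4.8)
The plan is to fix a point $(p,\xi)\in B_{\rm reg}$ and compute $d\nu$ explicitly on an adapted basis of $T_{(p,\xi)}B$, then read off the Jacobian with respect to the relevant volume forms. First I would choose a local coordinate system $(U;u_1,\dots,u_n)$ near $p$ and an oriented orthonormal frame $\{E_1,\dots,E_r\}$ of $\Pi^{\bot}$ compatible with the orientation, so that $d\hat V=\lambda\,du_1\wedge\cdots\wedge du_n$ with $\lambda=\det(f_1,\dots,f_n,E_1,\dots,E_r)$, and pick an oriented orthonormal basis $V_1,\dots,V_{r-1}$ of the fiber tangent space $T_\xi B_p$ (so that $d\sigma(V_1,\dots,V_{r-1})=1$). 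Using the normal connection $D^{\bot}$, I would decompose $T_{(p,\xi)}B$ into the horizontal subspace (lifts of $T_pM^n$ along which the extending unit normal field is $D^{\bot}$-parallel) and the vertical subspace $T_\xi B_p$.

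Next I would compute the differential of $\nu$. Since $\nu(p,\xi)=\xi$ is just the normal vector regarded as a point of $\R^{n+r}$, differentiating along a curve in $B$ gives the $\R^{n+r}$-derivative $D_X N$ of the corresponding unit normal field $N$. For the horizontal lift of $X\in T_pM^n$ the Weingarten formula \eqref{eq:Weingarten} gives $d\nu(X)=D_X N=-df(A_\xi X)\in\Pi(p)$, while for a vertical vector $V\in T_\xi B_p$ one has simply $d\nu(V)=V\in\Pi^{\bot}(p)$. Both images are orthogonal to $\xi$, so $d\nu$ indeed maps into $T_\xi S^{n+r-1}=\xi^{\bot}$, which splits orthogonally as $\Pi(p)\oplus T_\xi B_p$. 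The freedom in the choice of horizontal lift changes $d\nu(X)$ only by the vertical term $D^{\bot}_X N$, i.e.\ by a combination of the $V_j$; since these already appear as separate columns, this does not affect the determinant computed below, so the construction is well defined.

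Then I would factor the pullback using this orthogonal splitting. With respect to the oriented orthonormal decomposition $T_\xi S^{n+r-1}=\Pi(p)\oplus T_\xi B_p$, the matrix of $d\nu$ is block diagonal: on the vertical block it is the identity, contributing the fiber volume element $d\sigma$; on the base block it is $df\circ(-A_\xi)$. Since $df$ carries $\partial_{u_i}$ to $f_i$, the oriented area form $\omega_\Pi$ of $\Pi(p)$ satisfies $df^{*}\omega_\Pi=\lambda\,du_1\wedge\cdots\wedge du_n=d\hat V$, and the factor $-A_\xi$ contributes $\det(-A_\xi)=(-1)^n\det A_\xi=(-1)^n G(p,\xi)$. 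Multiplying the base and fiber contributions yields $\nu^{*}d\mu_{S^{n+r-1}}=(-1)^n G(p,\xi)\,d\hat V\wedge d\sigma=(-1)^n G(p,\xi)\,d\hat\mu_B$, as claimed.

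I expect the only real obstacle to be the orientation and sign bookkeeping: one must fix compatible orientations of $M^n$, of the fibers $B_p$, of $\Pi^{\bot}$ and of $S^{n+r-1}$ so that the block-diagonal determinant is read off with the correct overall sign, and verify that the factor $(-1)^n$ genuinely originates from $\det(-A_\xi)$ rather than being absorbed into the ordering of the adapted basis. The geometric content, namely that horizontally $d\nu$ is $-A_\xi$ by the Weingarten formula and vertically it is the identity, is straightforward; everything else is careful linear algebra carried out with these fixed conventions.
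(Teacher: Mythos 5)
Your proposal is correct and takes essentially the same route as the paper's own proof, merely phrased invariantly: the paper computes the single determinant $\det (\nu_{u_1},\dots,\nu_{u_n},\nu_{\theta_1},\dots,\nu_{\theta_{r-1}},\nu)$ in coordinates $(u_i,\theta_j)$, uses the Weingarten formula \eqref{eq:Weingarten} to replace each $\nu_{u_i}$ by $-df(A_{\xi}\,\partial_{u_i})$ modulo fiber directions (your observation that the $D^{\bot}$-term is a harmless vertical column), and then factors out $\det A_{\xi}$ with the sign $(-1)^n$, identifying the remaining determinant with $\lambda\,du_1\wedge\cdots\wedge du_n\wedge d\sigma=d\hat{\mu}_{B}$. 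Your horizontal/vertical block decomposition corresponds exactly to the paper's column operations, so the two arguments are the same linear algebra.
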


\medskip

\begin{proof}
We fix $(p,\xi) \in B_{\rm{reg}}$.
Let $(U; u_1, \cdots , u_n)$ be a local coordinate system of $M^n$, 
$(V; \theta_1, \cdots , \theta_{r-1} )$ be a local coordinate system of a unit $(r-1)$-dimensional sphere $S^{r-1}$.
Then, the following holds
\begin{multline}
\nu^* d \mu_{S^{n+r-1}} (p,\xi) \notag \\
= 
\det \left(
\nu_{u_1}, \cdots, \nu_{u_n},
\nu_{\theta_1}, \cdots ,
\nu_{\theta_{r-1}},\nu
\right) (p,\xi)\,
du_1 \wedge \cdots \wedge du_n \wedge 
d\theta_1 \wedge \cdots \wedge d\theta_{r-1},
\end{multline}
where $\nu_{u_i}  :=  \partial \nu / \partial u_i \, (1 \leq i \leq n)$ and 
$\nu_{\theta_j}  :=  \partial \nu / \partial \theta_j \, (1 \leq j \leq r-1)$.
By the Weingarten formula \eqref{eq:Weingarten}, we have
$$
\nu_{u_i} (p,\xi)
= -df \left(
A_{\xi} \, \partial_{u_i}
\right) (p)
+ D^{\bot}_{\partial_{u_i}} \nu \, (p,\xi),
$$
where $\partial_{u_i}  :=  \partial / \partial u_i$.
Thus, 
\begin{align*}
\det &\left(
\nu_{u_1}, \cdots, \nu_{u_n},
\nu_{\theta_1}, \cdots ,
\nu_{\theta_{r-1}},\nu 
\right) (p,\xi) \\
&= (-1)^n \det \left(
df \left( A_{\xi} \, \partial_{u_1} \right) , \cdots , df \left( A_{\xi} \, \partial_{u_n} \right)  , 
\nu_{\theta_1}, \cdots ,
\nu_{\theta_{r-1}},\nu 
\right) (p,\xi) \\
&= (-1)^n \det A_{\xi} \det \left(
f_1,\cdots , f_n, 
\nu_{\theta_1}, \cdots ,
\nu_{\theta_{r-1}},\nu 
\right) (p,\xi)
\end{align*}
holds. Therefore, 
\begin{align*}
\det A_{\xi} &\det \left(
f_1,\cdots , f_n, 
\nu_{\theta_1}, \cdots ,
\nu_{\theta_{r-1}},\nu 
\right)(p,\xi) du_1 \wedge \cdots \wedge du_n \wedge 
d\theta_1 \wedge \cdots \wedge d\theta_{r-1}
\\
&=  G(p,\xi) \, \lambda (p) \,
du_1 \wedge \cdots \wedge du_n \wedge d \sigma \\
&=  G(p,\xi)\, d \hat{\mu}_{B}
\end{align*}
implies the desired result.
\end{proof}
By Proposition~\ref{prop:pullback}, the differential form  $(-1)^n G\, d \hat{\mu}_{B}$ can be smoothly extended to whole on $B$.
Hence, we obtain the following corollary.

\medskip

\begin{corollary}\label{cor:curvature-form}
 Let $f:M^n \to \R^{n+r}$ be a co-orientable frontal,
and let $B$ be the unit normal bundle of $f$. 
Then, the $(n+r-1)$-form $|G|\,d \mu_B$, 
defined on $B_{\rm reg}$, 
extends continuously to $B$.
\end{corollary}

\medskip

From now on, we assume that 
$f:M^n\to \R^{n+r}$ is a co-orientable admissible frontal.
We denote by $\bar f$ the restriction of $f$ 
to the singular set $\Sigma_f$. 
For $p\in\Sigma_f$, 
Lemma~\ref{le:rank} implies that
$df_p(T_pM^n)$ is an $(n-1)$-dimensional subspace of $\R^{n+r}$.
We define
$$
\bar{\Pi}:\Sigma_f\to \widetilde{Gr}(n-1,n+r),
\qquad
\bar{\Pi}(p):=df_p(T_pM^n).
$$

\medskip

\begin{proposition}\label{prop:bar-f}
Let $f:M^n\to\R^{n+r}$ be a co-orientable admissible frontal.
Then $\bar f:=f|_{\Sigma_f}:\Sigma_f\to\R^{n+r}$ is a co-orientable
frontal with generalized Gauss map $\bar\Pi$.
\end{proposition}

\medskip

\begin{proof}
First, we show that 
$\bar\Pi$ is a generalized Gauss map
along $\bar f$.
In fact, for each $p\in\Sigma_f$, we have
$$
d\bar f_p(T_p\Sigma_f)\subset df_p(T_pM^n)=\bar\Pi(p).
$$
Moreover, 
it follows from Lemma~\ref{le:rank} that
$df$ has constant rank $n-1$ on $\Sigma_f$.
Hence, for each $p\in\Sigma_f$, 
there exist a neighborhood $W$ of $p$ in $M^n$
and smooth vector fields $X_1,\ldots,X_{n-1}$ on $W$
such that
$
df_q((X_1)_q),\ldots,df_q((X_{n-1})_q)
$
form a basis of $df_q(T_qM^n)$ for each $q\in\Sigma_f \cap W$. 
Namely, we have
$$
\bar\Pi(q)
=
\operatorname{span}
\{df_q((X_1)_q),\ldots,df_q((X_{n-1})_q)\}
$$
for each $q\in\Sigma_f \cap W$.
Thus $\bar\Pi$ is smooth.

It suffices to show that 
the regular set of $\bar f$ is dense in $\Sigma_f$.
We fix a singular point $p\in\Sigma_f\setminus C_f$
of the first kind arbitrarily.
Since the null space $\ker df_p$ is not contained in $T_p\Sigma_f$,
$$
d\bar f_p:T_p\Sigma_f\to\R^{n+r}
$$
is injective.
Namely, $\bar f$ is an immersion at 
a singular point of the first kind.

Since $f$ is admissible, 
the set $\Sigma_f\setminus C_f$ of singular points of the first kind 
is dense in $\Sigma_f$. 
Thus the regular set of $\bar f$ is dense in
$\Sigma_f$, and \(\bar f\) is a frontal. 
\end{proof}

\medskip

\begin{remark}\label{re:bar-f}
If all singular points of $f$ are of the first kind, 
namely, if $C_f$ is empty,
then $\bar f : \Sigma_f \to \R^{n+r}$
is an immersion.
We will use this fact in the proof of Theorem~\ref{thm:intro2}
in Section~\ref{sec:proof}.
\end{remark}

\medskip

Now, we let $\bar{B},\bar{A},\bar{\nu}$ and $\bar{G}$ be 
the bundle of unit normal vectors, the shape operator, the canonical Gauss map and the Lipschitz-Killing curvature of $\bar{f}$, respectively.

By Corollary~\ref{cor:curvature-form} and Proposition~\ref{prop:bar-f},
both $|G|\,d \mu_B$ and $|\bar{G}|\,d \mu_{\bar{B}}$
extend continuously to $B$ and $\bar{B}$, respectively.
Hence, 
we may define the total absolute curvature of $f$ as follows:

\medskip
\begin{definition}\label{def:TAC}
Let $M^n$ be a compact orientable $n$-dimensional manifold.
For a co-orientable admissible frontal $f : M^n \to \R^{n+r}$, the {\it total absolute curvature} $\tau(M^n,f)$ is defined by 
$$
\tau (M^n,f)  :=  
\frac{1}{\operatorname{vol} (S^{n+r-1})} \int_B |G|\, d \mu_B 
+ \frac{1}{\operatorname{vol} (S^{n+r-1})} \int_{\bar{B}} |\bar{G}|\, d \mu_{\bar{B}}.
$$
\end{definition}
\medskip

If we consider only the total absolute curvature on a regular set of $f$, it has no non-trivial lower bound, 
as shown in Example~\ref{ex:surface-1} below.

\medskip

\begin{example}\label{ex:surface-1}
               
		For a positive number $k > 0$, we define $f_k : S^n \to \R^{n+1}$ by 
		$$
		f_k (x_1, \cdots, x_n , x_{n+1}) 
		 :=(x_1, \cdots, x_n , k x_{n+1}^3 ) \quad ((x_1, \cdots, x_n , x_{n+1}) \in S^n \subset \R^{n+1}).
		$$
		Then, $f_k$ is an admissible frontal 
		 (see Figure~\ref{fig:mgt}).
		The total absolute curvature 
		of $f_k$ on the regular set satisfies
		$$
		\frac{1}{\operatorname{vol} (S^{n})} \int_B |G| \, d \mu_B 
		=
		\frac{6 \operatorname{vol} (S^{n-1})}{\operatorname{vol} (S^{n})} 
		\int_{-\pi/2}^{\pi/2} \frac{k |\cos (t)|}{(1 + (k \sin t)^2 / 4)^{\frac{n+1}{2}}} \, dt 
		<
		\frac{12k \operatorname{vol} (S^{n-1}) }{\operatorname{vol} (S^{n})}.
		$$
		
		Therefore, for any $\varepsilon>0$, by choosing
		\[
		0<k<\frac{\varepsilon \operatorname{vol}(S^n)}
		{12\operatorname{vol}(S^{n-1})},
		\]
		the contribution of the regular part of $f_k$ to the total absolute curvature is less than $\varepsilon$.
		
\end{example}

\begin{figure}[htb]
\centering
 \begin{tabular}{c}
\resizebox{4cm}{!}{\includegraphics{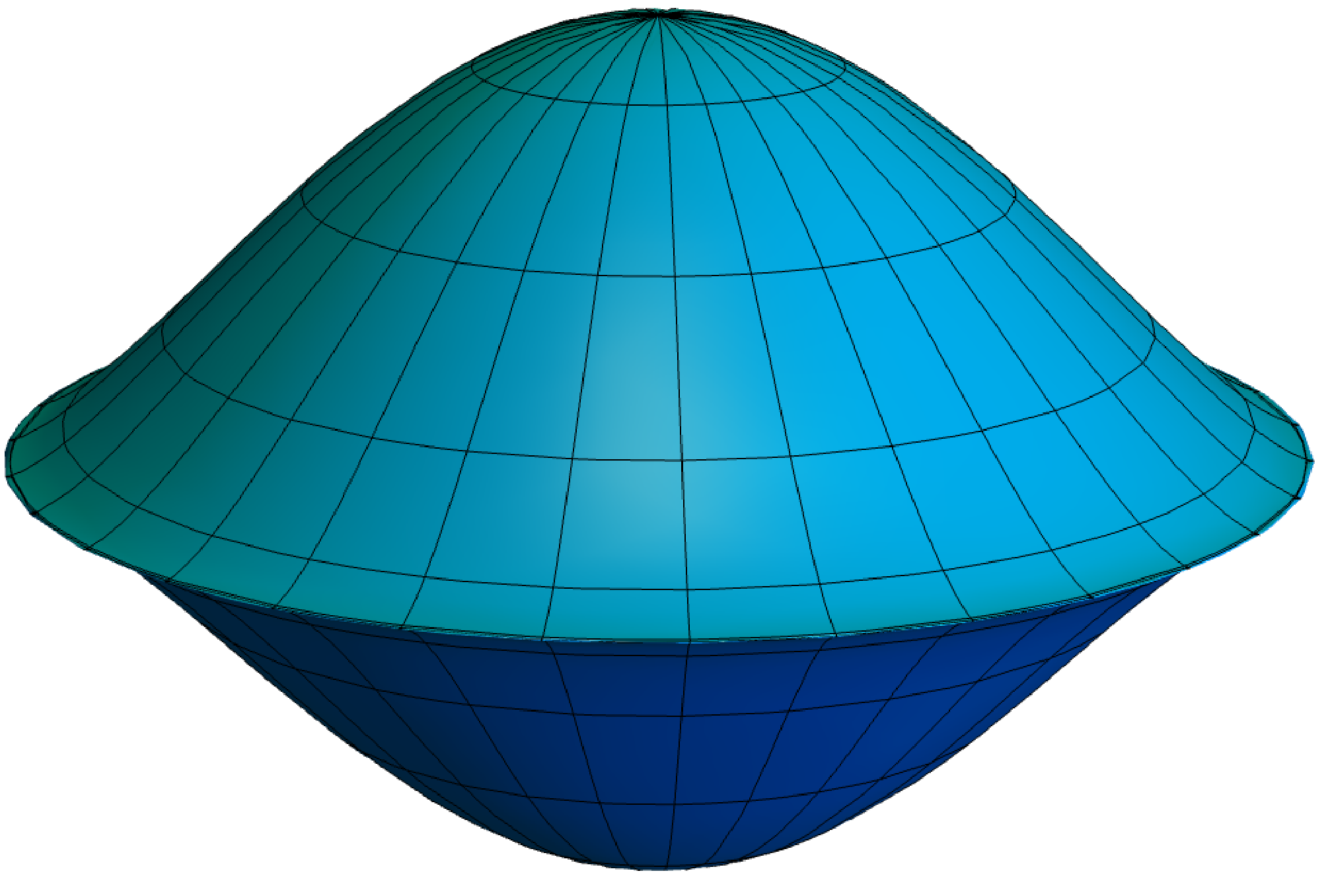}}
\end{tabular}  
\caption{The image $f_k (S^2)$ when $n=2$ and $k=\frac{2}{3}$.}
\label{fig:mgt}
\end{figure}

\section{Proof of Theorems A and B}
\label{sec:proof}

In this section, we prove Theorems~\ref{thm:intro1} and \ref{thm:intro2} by applying Morse theory to height functions.

Throughout this section,
we fix a co-orientable admissible frontal $f:M^n \to \R^{n+r}$.

\subsection{Proof of Theorem~\ref{thm:intro1}}

For a unit vector $\vect{w} \in S^{n+r-1}$,
define a function $h_{\vect{w}}$ on $M^n$ as 
$$
h_{\vect{w}} (p) := f(p) \cdot \vect{w} \quad (p \in M^n).
$$
We call $h_{\vect{w}}$ the {\it height function} with respect to $\vect{w}$.
If the derivative $(d h_{\vect{w}})_p$ vanishes at $p \in M^n$,
then $p$ is called a {\it critical point} of $h_{\vect{w}}$.
For a point $p \in M^n$, it holds that
\begin{equation}\label{eq:crit-pt}
\text{
$p$ is a critical point of $h_{\vect{w}}$ 
if and only if $\vect{w}$ is perpendicular to  $df_p (T_p M^n)$.}
\end{equation}
Furthermore, 
if the Hessian determinant  
of $h_{\vect{w}}$ is non-zero at a critical point $p$, 
then $p$ is called a {\it Morse critical point}.
At a regular point $p \in M^n_{\rm{reg}}$,
the following holds (see \cite{CL2}):
\begin{equation}\label{eq:MCP-regular}
\begin{minipage}{0.8\linewidth}
{\it 

If $p \in M^n_{\rm{reg}}$ is a critical point of $h_{\vect{w}}$,
then $p$ is a Morse critical point if and only if 
 $(p,\vect{w})$ is a regular point of the canonical Gauss map $\nu$.
}\end{minipage}
\end{equation}
A similar argument holds true for a singular point of the first kind as follows.

\medskip
\begin{lemma}\label{le:first-morse}
Let $p \in \Sigma_f \setminus C_f$ be a singular point of the first kind,
 and let $\vect{w}\in S^{n+r-1}$.
Suppose that $p$ is a critical point of the height function $h_{\vect{w}}$.
Then $p$ is a Morse critical point of $h_{\vect{w}}$ 
if and only if 
 $(p,\vect{w})$ is a regular point of the canonical Gauss map 
$\bar{\nu}$ of $\bar{f}$ and is not perpendicular to $\Pi(p)$.
\end{lemma}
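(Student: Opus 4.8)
The plan is to reduce the Hessian of $h_{\vect{w}}$ at $p$ to block-diagonal form by choosing coordinates adapted to the null direction, and then to read off nondegeneracy from the two diagonal blocks. First I would fix a nonvanishing vector field $\eta$ along $\Sigma_f$ near $p$ with $\eta(q)\in\ker df_q$ for each singular $q$; since $p$ is of the first kind, $\eta$ is transverse to $\Sigma_f$, so flowing $\Sigma_f$ along an extension of $\eta$ produces a coordinate system $(U;u_1,\dots,u_n)$ with $\Sigma_f\cap U=\{u_1=0\}$, $\partial_{u_1}=\eta$, and $\partial_{u_2},\dots,\partial_{u_n}$ tangent to $\Sigma_f$. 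The essential gain is that $f_1:=\partial f/\partial u_1$ vanishes identically on $\Sigma_f$, not merely at $p$.

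The critical point hypothesis means $\vect{w}$ is a unit vector orthogonal to $\bar{\Pi}(p)=df_p(T_pM^n)$, i.e.\ $(p,\vect{w})\in\bar{B}$ with $\bar{\nu}(p,\vect{w})=\vect{w}$. Writing $\Pi(p)=\bar{\Pi}(p)\oplus\langle\vect{\ell}\rangle$ for a unit vector $\vect{\ell}\perp\bar{\Pi}(p)$, the condition that $\vect{w}$ is not perpendicular to $\Pi(p)$ becomes $t:=\vect{w}\cdot\vect{\ell}\neq0$. I would then examine the Hessian matrix $H=(f_{ij}(p)\cdot\vect{w})$. Because $f_1\equiv0$ on $\Sigma_f$ and each $\partial_{u_j}$ $(j\ge2)$ is tangent to it, the mixed entries $f_{1j}(p)=\partial_{u_j}f_1|_p$ vanish for $j\ge2$; the lower right $(n-1)\times(n-1)$ block is exactly the Hessian $C$ at $p$ of the restriction of $h_{\vect{w}}$ to $\Sigma_f$, which is the height function of the frontal $\bar{f}$; and since $f_1$ is $\Pi$-valued and vanishes at $p$, its derivative $f_{11}(p)$ lies in $\Pi(p)$, so with $\vect{w}\perp\bar{\Pi}(p)$ the single remaining entry is $H_{11}=t\,(\vect{\ell}\cdot f_{11}(p))$. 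Thus $H$ is block diagonal and $\det H=t\,(\vect{\ell}\cdot f_{11}(p))\,\det C$.

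It then remains to interpret the three factors. For the scalar $\vect{\ell}\cdot f_{11}(p)$, I would use non-degeneracy of the singular point: since $\lambda$ vanishes on $\Sigma_f=\{u_1=0\}$ we have $\partial_{u_j}\lambda|_p=0$ for $j\ge2$, so $d\lambda_p\neq0$ forces $\partial_{u_1}\lambda|_p\neq0$; expanding as in the proof of Lemma \ref{le:rank} and using $f_1(p)=0$ gives $\partial_{u_1}\lambda|_p=\det(f_{11}(p),f_2(p),\dots,f_n(p),E_1(p),\dots,E_r(p))$, which is nonzero precisely when $f_{11}(p)\notin\bar{\Pi}(p)\oplus\Pi^{\bot}(p)$, i.e.\ when $\vect{\ell}\cdot f_{11}(p)\neq0$. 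For the block $C$, note that $p$ is a regular point of the frontal $\bar{f}$, since $d\bar{f}_p=df_p|_{T_p\Sigma_f}$ has rank $n-1$ by Lemma \ref{le:rank} and the first-kind condition $\ker df_p\cap T_p\Sigma_f=\{0\}$; hence the immersion case recalled before this lemma, applied to $\bar{f}$ at $p$, shows that $C$ is nondegenerate if and only if $\vect{w}$ is a regular value of $\bar{\nu}$. Combining, $p$ is a Morse critical point, i.e.\ $\det H\neq0$, if and only if $t\neq0$ and $C$ is nondegenerate, which is exactly the stated condition.

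The main obstacle is the construction and use of the adapted coordinates: it is precisely the first-kind hypothesis (transversality of $\ker df_p$ to $T_p\Sigma_f$) that lets us arrange $f_1\equiv0$ along all of $\Sigma_f$, and this is what forces the off-diagonal entries $f_{1j}(p)$ $(j\ge2)$ to vanish and hence makes the Hessian block diagonal. Were these cross terms present, nondegeneracy of $H$ would involve a Schur complement mixing $t$, $C$, and the cross terms, and the clean equivalence would break down; so securing this step is the crux of the argument.
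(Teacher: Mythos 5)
Your proposal is correct and follows essentially the same route as the paper: coordinates adapted so the null direction is a coordinate field vanishing along $\Sigma_f$ (the paper uses $u_n$ with $f_n=u_n\vect{v}$ via Hadamard's lemma, you use $u_1$ with $f_1\equiv 0$ on $\Sigma_f$), yielding the same block-diagonal Hessian and the factorization $\det H=(\mbox{transversality factor})\times\det\bigl[\mbox{Hessian of }h_{\vect{w}}|_{\Sigma_f}\bigr]$, with the two factors interpreted exactly as in the paper. Your explicit check that non-degeneracy of $p$ forces $\vect{\ell}\cdot f_{11}(p)\neq 0$ (so that $H_{11}\neq 0$ is equivalent to $\vect{w}$ being non-perpendicular to $\Pi(p)$) is a welcome filling-in of a step the paper leaves implicit in asserting ``$\vect{v}(p)\cdot\vect{w}\neq 0$ iff $\vect{w}$ is not perpendicular to $\Pi(p)$,'' but it is a refinement of the same argument, not a different one.
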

\medskip

\begin{proof}

Since $p$ is a singular point of the first kind, we may choose local coordinates
$(u_1,\ldots,u_n)$ centered at $p$ such that
\[
\Sigma_f\cap U=\{u_n=0\},\qquad
\ker df_q=\operatorname{span}\{\partial_{u_n}\}
\quad(q\in\Sigma_f\cap U).
\]
In particular,
$
f_n(u_1,\ldots,u_{n-1},0)=\vect 0 .
$
Then there exists a smooth map $\vect{v} : U \to \R^{n+r}$ such that $f_n = u_n\, \vect{v}$.
Then we have
$$
f_{in} (p) = \vect{0} \hspace{6mm} (1 \leq i \leq n-1), \qquad
f_{nn} (p) = \vect{v} (p),
$$
where  $f_{ij}  := \partial^2 f / \partial u_i \partial u_j \,\,(1 \leq i,j \leq n) $.
 The vector fields $\partial_{u_i}$ $(1\leq i\leq n-1)$ are tangent to $\Sigma_f$.
Therefore, we obtain 
$$
f_{ij} (p) \cdot \vect{w} = df \left( \bar{A}_{\vect{w}} \, \partial_{u_i} \right) \, (p) \cdot f_j (p)\quad (1 \leq i,j \leq n-1).
$$

Consequently, we have 
$$
\det [f_{ij}(p) \cdot \vect{w}]_{(1 \leq i,j \leq n)} 
= (\vect{v} (p) \cdot \vect{w}) \det \left[df \left( \bar{A}_{\vect{w}} \, \partial_{u_i} \right) (p) \cdot f_j (p) \right]_{(1 \leq i,j \leq n-1)}.
$$
As a result, $p$ is a Morse critical point if and only if
both $\vect{v} (p) \cdot \vect{w}$ and $\det [df(\bar{A}_{\vect{w}}\partial /\partial u_i) \cdot f_j]_{(1 \leq i,j \leq n-1)}$ are non-zero.
Since $p$ is a critical point of $h_{\vect{w}}$, 
the unit vector $\vect{w}$ is perpendicular to 
 $df_p (T_p \Sigma_f)$.
Hence, $\vect{v} (p) \cdot \vect{w}$ is non-zero if and only if $\vect{w}$ is not perpendicular to $\Pi(p)$.
The determinant $\det [df(\bar{A}_{\vect{w}} \, \partial_{u_i}) \cdot f_j]_{(1 \leq i,j \leq n-1)}$ is non-zero if and only if 
$\bar{A}_{\vect{w}}$ is non-degenerate, that is, 
 $(p,\vect{w})$ is a regular point 
of the canonical Gauss map $\bar{\nu}$ of $\bar{f}$.
Thus, we have the desired result.
\end{proof}

As we will see later, the proof of Theorem~\ref{thm:intro1} 
does not require any conditions 
for a singular point of the second kind to be a Morse critical point
(cf.\ Lemma~\ref{le:second-zero}, Lemma~\ref{le:measure}).

Define subsets $Y_1$ and $Y_2$ of $S^{n+r-1}$ by
$$
Y_1 := \nu \left( \bigcup_{p \in \Sigma_f} B_p \right), \quad 
Y_2 := \bar{\nu} \left( \bigcup_{p \in C_f} \bar{B}_p \right).
$$
We remark that $Y_1$ (resp. $Y_2$) is the set of unit normal vectors of $f$ on $\Sigma_f$ (resp. $\bar{f}$ on $C_f$).
The following three lemmas 
 (Lemmas~\ref{le:bundle-measure}, \ref{le:second-zero} and \ref{le:measure}) are used in the proof of Theorem~\ref{thm:intro1}.

\medskip
\begin{lemma}\label{le:bundle-measure}
Both $Y_1$ and $Y_2$ have measure zero in $S^{n+r-1}$.
\end{lemma}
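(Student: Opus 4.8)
The plan is to prove both statements by a dimension count combined with the standard corollary of Sard's theorem: a smooth map from a manifold of dimension strictly less than $N$ into an $N$-dimensional manifold has image of measure zero, since in that case every point of the image is a critical value.

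First I would treat $Y_1$. The set $\cup_{p\in\Sigma_f}B_p$ is the restriction $B|_{\Sigma_f}$ of the unit normal bundle of $f$ to the singular set. Because $f$ is co-orientable, the generalized Gauss map $\Pi$, and hence $\Pi^{\perp}$, are globally smooth, so $B|_{\Sigma_f}$ is a smooth manifold; its fibers $B_p\cong S^{r-1}$ have dimension $r-1$, while $\Sigma_f$ is a regular hypersurface of dimension $n-1$. Hence $\dim B|_{\Sigma_f}=(n-1)+(r-1)=n+r-2$. The canonical Gauss map $\nu(p,\xi)=\xi$ is globally smooth, so $Y_1=\nu(B|_{\Sigma_f})$ is the image of an $(n+r-2)$-dimensional manifold in $S^{n+r-1}$. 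Since $n+r-2<n+r-1$, the Sard corollary gives that $Y_1$ has measure zero.

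For $Y_2$ I would invoke the admissibility hypothesis. By Lemma \ref{le:rank}, $\bar{\Pi}(p)=df_p(T_pM^n)$ is $(n-1)$-dimensional, so $\bar{\Pi}^{\perp}$ has rank $r+1$ and the fibers $\bar{B}_p\cong S^{r}$ have dimension $r$. The set $C_f$ of singular points of the second kind need not itself be a submanifold, and this is the only real subtlety; it is precisely where condition $(2)$ of Definition \ref{def:admissible} enters. Since $C_f\subseteq\mathcal{H}$ with $\mathcal{H}$ a regular hypersurface of $\Sigma_f$, so that $\dim\mathcal{H}=n-2$, we have the inclusion $\cup_{p\in C_f}\bar{B}_p\subseteq\bar{B}|_{\mathcal{H}}$, and the latter is a smooth manifold of dimension $(n-2)+r=n+r-2$. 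Thus $Y_2\subseteq\bar{\nu}(\bar{B}|_{\mathcal{H}})$ is again the image of an $(n+r-2)$-dimensional manifold under the smooth map $\bar{\nu}$, and the same argument yields measure zero.

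The argument is essentially bookkeeping of dimensions; the one point requiring care is that $C_f$ may be a badly behaved set, which is handled by enclosing it in the smooth hypersurface $\mathcal{H}$, making transparent why admissibility rather than mere non-degeneracy is assumed. Beyond this, nothing is needed except the smoothness of the bundles $B$ and $\bar{B}$ and of the maps $\nu,\bar{\nu}$, all of which follow from co-orientability and from the frontal structure of $\bar{f}$ established earlier in this section.
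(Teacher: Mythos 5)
Your proof is correct and takes essentially the same route as the paper's: both reduce $Y_1$ and $Y_2$ to images under the smooth maps $\nu$ and $\bar{\nu}$ of the $(n+r-2)$-dimensional bundles $B|_{\Sigma_f}$ and $\bar{B}|_{\mathcal{H}}$ (using admissibility to replace the possibly ill-behaved $C_f$ by the regular hypersurface $\mathcal{H}$), and conclude by dimension count. You merely make explicit the Sard-type corollary and the fiber dimensions ($S^{r-1}$ for $B$, $S^{r}$ for $\bar{B}$ via Lemma \ref{le:rank}) that the paper leaves implicit.
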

\medskip

\begin{proof}
Since the bundle $ \bigcup_{p \in \Sigma_f} B_p$ is locally diffeomorphic to $\Sigma_f \times S^{r-1}$, 
the bundle $\bigcup_{p \in \Sigma_f} B_p$ is an $(n+r-2)$-manifold.
Therefore, $Y_1$ has measure zero.
Since $f$ is an admissible frontal, there exists a regular hypersurface $\mathcal{H}$ such that $C_f  \subset \mathcal{H}$.
Thus, it suffices to show that $\bar{\nu} ( \bigcup_{p \in \mathcal{H}} \bar{B}_p )$ has measure zero.
Since $\bigcup_{p \in \mathcal{H}} \bar{B}_p$ is locally diffeomorphic to $\mathcal{H} \times S^{r}$, 
the bundle $ \bigcup_{p \in \mathcal{H}} \bar{B}_p$ is an $(n+r-2)$-manifold.
Thus, $\bar{\nu} ( \bigcup_{p \in \mathcal{H}} \bar{B}_p )$ has measure zero in $S^{n+r-1}$.
Therefore, $Y_2$ has measure zero in $S^{n+r-1}$.
\end{proof}

\begin{lemma}\label{le:second-zero}
If a unit vector $\vect{w}$ does not belong to $Y_2$, 
then there is no critical point of the height function  $h_{\vect{w}}$ on 
 the set $C_f$ of the singular points of the second kind.
\end{lemma}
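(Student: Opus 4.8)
The plan is to prove the contrapositive: I will show that if some point $p \in C_f$ is a critical point of $h_{\vect{w}}$, then necessarily $\vect{w} \in Y_2$. The whole argument is a direct unwinding of the definitions, so the proof will be short; the only thing to get right is the identification of the relevant subspaces.

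First I would recall the characterization of critical points stated just before the lemma: a point $p$ is a critical point of $h_{\vect{w}}$ if and only if $\vect{w}$ is perpendicular to $(df)_p(T_p M^n)$. Now suppose $p \in C_f$ is such a critical point. Since $C_f \subseteq \Sigma_f$ and $f$ is an admissible frontal, the point $p$ is a non-degenerate singular point, so Lemma \ref{le:rank} gives that the rank of $df_p$ equals $n-1$. Hence $(df)_p(T_p M^n)$ is exactly the $(n-1)$-dimensional subspace $\bar{\Pi}(p) = df_p(T_p M^n)$, namely the value of the generalized Gauss map of $\bar{f}$ at $p$.

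The key step is then to observe that the criticality condition $\vect{w} \perp (df)_p(T_p M^n)$ says precisely that $\vect{w} \in \bar{\Pi}^{\bot}(p)$, the orthogonal complement of $\bar{\Pi}(p)$. Since $\vect{w}$ is a unit vector, this places $\vect{w}$ in the fiber $\bar{B}_p$ of the unit normal bundle of $\bar{f}$ over $p$, i.e. $\vect{w} \in \bar{B}_p$ with $p \in C_f$. Applying the canonical Gauss map $\bar{\nu}$ of $\bar{f}$, which sends $(p,\xi) \mapsto \xi$, we get $\vect{w} = \bar{\nu}(p, \vect{w}) \in \bar{\nu}\!\left(\bigcup_{q \in C_f} \bar{B}_q\right) = Y_2$. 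This is exactly what we wanted to rule out.

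I do not expect a genuine obstacle here: everything reduces to matching the definition of a critical direction (perpendicularity to the image of $df_p$) with the definition of the fiber $\bar{B}_p$ and of $Y_2$. The one point requiring care is invoking Lemma \ref{le:rank} to guarantee that at $p \in C_f$ the image $(df)_p(T_p M^n)$ coincides with the $(n-1)$-dimensional space $\bar{\Pi}(p)$ on which $\bar{B}_p$ is built, so that a unit vector orthogonal to it lands in $\bar{B}_p$ rather than in some larger or smaller space. Once that identification is in place, the inclusion $\vect{w} \in Y_2$ is immediate, and taking the contrapositive yields the statement of the lemma.
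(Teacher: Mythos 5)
Your proposal is correct and takes essentially the same route as the paper's proof: both argue by contraposition, observing that criticality of $h_{\vect{w}}$ at $p \in C_f$ means $\vect{w}$ is perpendicular to $(df)_p(T_p M^n) = \bar{\Pi}(p)$, hence is a unit normal vector of $\bar{f}$ at $p$ and so lies in $Y_2$. Your explicit invocation of Lemma \ref{le:rank} to identify $(df)_p(T_p M^n)$ with the $(n-1)$-dimensional space $\bar{\Pi}(p)$ only spells out what the paper leaves implicit in its definition of $\bar{\Pi}$.
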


\begin{proof}
We prove the contraposition.
If there exists a critical point $p \in C_f$ of $h_{\vect{w}}$, then $\vect{w}$ is perpendicular to  $\bar{\Pi} (p) =df_p (T_p M^n)$
(cf.\ \eqref{eq:crit-pt}).
In other words, $\vect{w}$ is a unit normal vector of $\bar{f}$ at $p$. Thus, $\vect{w}$ belongs to $Y_2$.
\end{proof}

We define the set $Q \subset S^{n+r-1}$ by
\begin{equation}\label{eq:setQ}
Q  := \{ \vect{w} \in S^{n+r-1} \mid h_{\vect{w}} \mbox{ is not a Morse function} \} \cup Y_2.	
\end{equation}

\begin{lemma}\label{le:measure}
The set $Q$ has measure zero in $S^{n+r-1}$.
\end{lemma}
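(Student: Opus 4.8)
The plan is to cover $Q$ by finitely many sets of measure zero, obtained by sorting the critical points of $h_{\vect{w}}$ according to the three types of points of $M^n$ — regular points, singular points of the first kind, and singular points of the second kind — and, for each type, isolating the set of directions $\vect{w}$ for which non-degeneracy of the Hessian can fail at a point of that type. Concretely, I would show that if $\vect{w}$ avoids a certain explicit null set, then $h_{\vect{w}}$ has no critical point on $C_f$ and every critical point on $M^n_{\rm{reg}}$ and on $\Sigma_f \setminus C_f$ is a Morse critical point; this forces $\vect{w} \notin Q$.

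First I would dispose of the second kind together with the extra term $Y_2$. By Lemma \ref{le:second-zero}, if $\vect{w} \notin Y_2$ then $h_{\vect{w}}$ has no critical point on $C_f$ at all, and $Y_2$ itself has measure zero by Lemma \ref{le:bundle-measure}; so this contribution, and the term $Y_2$ appearing in the definition of $Q$, are harmless. For the regular points, I would restrict the canonical Gauss map to $B_{\rm{reg}}$ and apply Sard's theorem: the set $\mathcal{C}_\nu \subset S^{n+r-1}$ of critical values of $\nu|_{B_{\rm{reg}}}$ has measure zero, and by the Chern--Lashof criterion recalled above (\cite{CL2}), for $\vect{w} \notin \mathcal{C}_\nu$ every critical point of $h_{\vect{w}}$ in $M^n_{\rm{reg}}$ is a Morse critical point.

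The first kind is where the two distinct failure mechanisms appear, and this is the step needing the most care. By Lemma \ref{le:first-morse}, at a critical point $p \in \Sigma_f \setminus C_f$ the point $p$ fails to be Morse exactly when $\vect{w}$ is a critical value of $\bar{\nu}$ or when $\vect{w}$ is perpendicular to $\Pi(p)$. The set $\mathcal{C}_{\bar\nu}$ of critical values of $\bar{\nu}$ (restricted to the regular set of $\bar{f}$) is null by Sard's theorem. The key observation is that the set of $\vect{w}$ perpendicular to $\Pi(p)$ for some $p \in \Sigma_f$ is precisely $Y_1 = \nu\bigl(\cup_{p \in \Sigma_f} B_p\bigr)$, since a unit vector $\vect{w} \perp \Pi(p)$ is exactly a unit normal $\nu(p,\vect{w}) \in Y_1$; by Lemma \ref{le:bundle-measure}, $Y_1$ has measure zero. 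Hence for $\vect{w} \notin \mathcal{C}_{\bar\nu} \cup Y_1$ every critical point of the first kind is non-degenerate.

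Combining the three cases, if $\vect{w}$ lies outside $Y_1 \cup Y_2 \cup \mathcal{C}_\nu \cup \mathcal{C}_{\bar\nu}$, then $h_{\vect{w}}$ has no critical point on $C_f$ and all its critical points elsewhere are non-degenerate; that is, $h_{\vect{w}}$ is a Morse function and $\vect{w} \notin Y_2$, so $\vect{w} \notin Q$. This yields the inclusion $Q \subseteq Y_1 \cup Y_2 \cup \mathcal{C}_\nu \cup \mathcal{C}_{\bar\nu}$, a finite union of null sets, and the conclusion follows. The only genuine subtlety is the completeness of this case analysis together with the recognition that the perpendicularity locus for first-kind points is contained in $Y_1$; once that is in place, the result reduces entirely to Sard's theorem and Lemma \ref{le:bundle-measure}.
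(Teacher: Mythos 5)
Your proposal is correct and follows essentially the same route as the paper's proof: the same three-way case analysis (regular points via Sard for $\nu$, first-kind points via Lemma \ref{le:first-morse} with Sard for $\bar{\nu}$ plus the identification of the perpendicularity locus with $Y_1$, and second-kind points eliminated by Lemma \ref{le:second-zero}), concluding with Lemma \ref{le:bundle-measure}. The only cosmetic difference is that you state the inclusion $Q \subseteq Y_1 \cup Y_2 \cup \mathcal{C}_\nu \cup \mathcal{C}_{\bar\nu}$ while the paper records this as an equality, which changes nothing in the argument.
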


\begin{proof}
For $\vect{w} \in S^{n+r-1}$, we characterize the condition
that the height function $h_{\vect{w}}$ is not a Morse function.
If $\vect{w} \notin Y_2$,
Lemma~\ref{le:second-zero} yields that there is no critical point of $h_{\vect{w}}$ on $C_f$. 
Therefore, we only need to characterize the condition that a regular point or a singular point of the first kind is a Morse critical point.
 By \eqref{eq:MCP-regular},
a point $p \in M^n_{\rm{reg}}$ is a Morse critical point if and only if 
 $(p,\vect{w})$ is a regular point of the canonical Gauss map $\nu$.
By Lemma~\ref{le:first-morse}, 
a singular point of the first kind $p \in \Sigma_f \setminus C_f$ 
is a Morse critical point if and only if 
 $(p,\vect{w})$ is a regular point 
of the canonical Gauss map $\bar{\nu}$ and is not perpendicular to $\Pi(p)$, 
that is, not a unit normal vector of $f$ at $p$.
 Due to the above, for $\vect w\notin Y_2$,
the height function $h_{\vect{w}}$ is not a Morse function if and only if $\vect{w}$ is a critical value of $\nu$ or $\bar{\nu}$, 
or belongs to $Y_1$.
Hence, we obtain
$$
Q = \{ \vect{w} \in S^{n+r-1} \mid \vect{w} \mbox{ is a critical value of } \nu \mbox{ or } \bar{\nu} \} 
\cup Y_1 \cup Y_2.
$$ 
By Sard's theorem, the first term of the right-hand side has measure zero in $S^{n+r-1}$. 
By Lemma~\ref{le:bundle-measure}, both the second term and the third term have measure zero in $S^{n+r-1}$.
\end{proof}

For a unit vector $\vect{w} \in S^{n+r-1} \setminus Q$, the height function $h_{\vect{w}}$ is a Morse function.
Hence, there are finitely many critical points of $h_{\vect{w}}$ on $M^n$.
For a subset $L  \subset M^n$, 
we denote by $\# \operatorname{crit} (h_{\vect{w}},L)$ the number of critical points of $h_{\vect{w}}$ on $L$.
We prove Theorem~\ref{thm:intro1}-(1).

\begin{proof}[Proof of Theorem~\ref{thm:intro1}-(1)]

Since $Y_2$ is a subset of $Q$, 
Lemma~\ref{le:second-zero} yields that 
$$
\# \operatorname{crit} (h_{\vect{w}},\Sigma_f) = \# \operatorname{crit} (h_{\vect{w}},\Sigma_f \setminus C_f)
$$
 for each $\vect{w} \in S^{n+r-1} \setminus Q$.
By the Morse inequality, we have
$$
\sum_{q = 0}^n b_q (M^n)  \leq \# \operatorname{crit} (h_{\vect{w}},M^n) = 
\# \operatorname{crit} (h_{\vect{w}},M^n_{\rm{reg}}) +  \# \operatorname{crit} (h_{\vect{w}},\Sigma_f \setminus C_f)
$$
for each $\vect{w} \in S^{n+r-1} \setminus Q$.
By integrating both sides over $S^{n+r-1} \setminus Q$, we obtain
\begin{align*}
\left(\sum_{q = 0}^n b_q (M^n) \right) \, \operatorname{vol} (S^{n+r-1}) &\leq 
 \int_{S^{n+r-1} \setminus Q} \# \operatorname{crit} (h_{\vect{w}},M^n_{\rm{reg}})\, d\mu_{S^{n+r-1}} \\
&\quad + \int_{S^{n+r-1} \setminus Q} \# \operatorname{crit} (h_{\vect{w}},\Sigma_f \setminus C_f)\,d\mu_{S^{n+r-1}}.
\end{align*}
Here, we used Lemma~\ref{le:measure} which implies that
 $\int_{S^{n+r-1} \setminus Q}  d\mu_{S^{n+r-1}} $ is equal to the volume of $S^{n+r-1}$.
Moreover, the integral of the number of critical points of the height function on $S^{n+r-1}$ is equal to 
the integral of the absolute value of the Lipschitz-Killing curvature on the unit normal bundle (see the proof of Theorem $1$ in \cite{CL1}).
Therefore, 
$$
\left(\sum_{q = 0}^n b_q (M^n) \right)\, \operatorname{vol} (S^{n+r-1}) \leq \int_{B } |G|\, d \mu_{B} +  \int_{\bar{B}} |\bar{G}\,| d \mu_{\bar{B}}
= \operatorname{vol} (S^{n+r-1}) \tau(M^n,f)
$$
holds, which implies the assertion.
\end{proof}

Now, we prove Theorem~\ref{thm:intro1}-(2).

\begin{proof}[Proof of Theorem~\ref{thm:intro1}-(2)]

By Theorem~\ref{thm:intro1}-(1), we have
\[
\sum_{i=0}^n b_i(M^n) \leq \tau(M^n,f)<3.
\]

If $M^n$ had at least two connected components, 
the compactness and the orientability of $M^n$
yield $b_0(M^n)+b_n(M^n)\geq4$, which is
impossible. Hence $M^n$ is connected.

As shown in the proof of Theorem~\ref{thm:intro1}-(1),
the total absolute curvature $\tau(M^n,f)$ can be written as
\begin{equation}\label{eq:ave-tau}
\displaystyle \tau (M^n,f) 
= \frac{1}{\operatorname{vol} (S^{n+r-1})} \int_{S^{n+r-1}\setminus Q} \# \operatorname{crit} (h_{\vect{w}},M^n) d \mu_{S^{n+r-1}}.
\end{equation}
Since $M^n$ is compact, 
 $\# \operatorname{crit} (h_{\vect{w}},M^n)\geq 2$ for each $\vect{w} \in S^{n+r-1} \setminus Q $.
 If 
there were no $\vect{w} \in S^{n+r-1} \setminus Q$
such that $\# \operatorname{crit} (h_{\vect{w}},M^n) = 2$,
then $\tau(M^n,f) \geq 3$, contradicting the assumption $\tau(M^n,f)<3$.
 Hence, there exists $\vect w\in S^{n+r-1}\setminus Q$ such that
$h_{\vect w}$ is a Morse function with exactly two critical points.
Consequently, by Reeb's theorem, 
$M^n$ is homeomorphic to an $n$-dimensional sphere.
\end{proof}

The proof of Theorem~\ref{thm:intro1}-(3) is given 
in Appendix~\ref{appendix:proof-theorem-a3}.

\subsection{Proof of Theorem~\ref{thm:intro2}}
The following lemma is used in the proof of Theorem~\ref{thm:intro2}.

\medskip
\begin{lemma}\label{le:convex-domain}
Let $M^n$ be an $n$-dimensional orientable compact manifold,
and let $f:M^n \to \R^{n+r}$ be a co-orientable frontal whose singular points are of the first kind.
If there exists a closed convex domain $\Omega$ in an $n$-dimensional affine subspace $\mathcal{P}^n$ of $\R^{n+r}$ such that 
$
f(M^n) \subset \Omega
$
 and $f(\Sigma_f) $ coincides with the boundary of $\Omega$,
then $f(M^n)$ coincides with $\Omega$.
\end{lemma}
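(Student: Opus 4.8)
The plan is to prove the reverse inclusion $\Omega \subseteq f(M^n)$ by showing that $f(M^n_{\mathrm{reg}})$ fills out the whole interior of $\Omega$, the boundary being supplied directly by the hypothesis $f(\Sigma_f)=\partial\Omega$. Throughout, let $V\subset\R^{n+r}$ be the $n$-dimensional linear subspace parallel to $\mathcal{P}^n$, and let $\mathrm{int}\,\Omega$ and $\partial\Omega$ be the interior and boundary of $\Omega$ taken inside $\mathcal{P}^n$; since $\Omega$ is a closed convex domain, $\mathrm{int}\,\Omega$ is a nonempty, connected (indeed convex) open subset of $\mathcal{P}^n$.

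First I would analyze $f$ near a regular point $p$. Because $f(M^n)\subset\mathcal{P}^n$, every curve through $p$ is sent into $\mathcal{P}^n$, so $df_p(T_pM^n)\subseteq V$; as $p$ is regular the rank of $df_p$ equals $n=\dim V$, forcing $df_p(T_pM^n)=V$. Viewing $f$ as a map into $\mathcal{P}^n\cong\R^n$, the inverse function theorem then shows that $f$ is a local diffeomorphism at $p$, so $f|_{M^n_{\mathrm{reg}}}$ is an open map into $\mathcal{P}^n$. This has two consequences. If a regular point $p$ satisfied $f(p)\in\partial\Omega$, then an entire $\mathcal{P}^n$-neighborhood of $f(p)$ would lie in $f(M^n)\subset\Omega$, contradicting $f(p)\in\partial\Omega$; hence $f(M^n_{\mathrm{reg}})\subseteq\mathrm{int}\,\Omega$. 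Moreover $f(M^n_{\mathrm{reg}})$ is open in $\mathcal{P}^n$, so it is open in $\mathrm{int}\,\Omega$.

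Next I would verify that $f(M^n_{\mathrm{reg}})$ is relatively closed in $\mathrm{int}\,\Omega$. Given $f(p_k)\to y$ with $y\in\mathrm{int}\,\Omega$ and $p_k\in M^n_{\mathrm{reg}}$, compactness of $M^n$ yields a subsequence with $p_k\to p$ and hence $f(p)=y$. Since $y\in\mathrm{int}\,\Omega$ means $y\notin\partial\Omega=f(\Sigma_f)$, the limit $p$ is not a singular point, so $p\in M^n_{\mathrm{reg}}$ and $y\in f(M^n_{\mathrm{reg}})$. As $f$ is a frontal, $M^n_{\mathrm{reg}}$ is open and dense, hence nonempty, so $f(M^n_{\mathrm{reg}})$ is a nonempty subset of the connected set $\mathrm{int}\,\Omega$ that is simultaneously open and closed there; therefore $f(M^n_{\mathrm{reg}})=\mathrm{int}\,\Omega$. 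Combining this with $f(\Sigma_f)=\partial\Omega$ gives $f(M^n)=\mathrm{int}\,\Omega\cup\partial\Omega=\Omega$.

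I expect the only delicate point to be the first step: confirming that at a regular point the image $df_p(T_pM^n)$ is forced to coincide with the direction space $V$ of $\mathcal{P}^n$, so that $f$ really is an open local diffeomorphism onto $\mathcal{P}^n$ and no regular point can land on $\partial\Omega$. Once this is established, the remaining open-and-closed argument on the connected convex interior is routine; the hypothesis that the singular points are of the first kind is what guarantees that $\Sigma_f$ is a regular hypersurface and $M^n_{\mathrm{reg}}$ is open and dense, legitimizing the limiting step.
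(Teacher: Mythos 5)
Your proof is correct, and it takes a genuinely different route from the paper's. The paper argues by contradiction through a second-order variational computation: given a putative missed point $x \in \Omega \setminus f(M^n)$, it minimizes the distance function $\Phi_x(p) = d(x,f(p))$, observes that the minimizer $p_m$ must be singular (essentially your first step: at a regular point $df_p(T_pM^n)$ is the whole direction space of $\mathcal{P}^n$, so the nonzero vector $\vect{y}=x-f(p_m)$ could not be normal to it), and then uses the first-kind hypothesis in an essential way: in adapted coordinates with $f_n = u_n\,\vect{v}$ along the null direction transverse to $\Sigma_f$, the second derivative of $\Phi_x$ along the null curve at the minimum forces $f_{nn}(p_m)\cdot\vect{y} < 0$, while convexity of $\Omega$ (the supporting hyperplane at $f(p_m)\in\partial\Omega$) makes the height function $h_{\xi}$ with $\xi = \vect{y}/\|\vect{y}\|$ attain a minimum along the same curve, forcing $f_{nn}(p_m)\cdot\vect{y} > 0$, a contradiction. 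You replace all of this local analysis by a soft open-and-closed argument: $f$ restricted to $M^n_{\rm{reg}}$ is a local diffeomorphism into $\mathcal{P}^n$, so $f(M^n_{\rm{reg}})$ is open and avoids $\partial\Omega$, and compactness of $M^n$ together with $f(\Sigma_f)=\partial\Omega$ makes it relatively closed in the connected set $\operatorname{int}\Omega$, whence $f(M^n_{\rm{reg}})=\operatorname{int}\Omega$ and $f(M^n)=\Omega$. What each approach buys: yours is more elementary and strictly more general --- note that, contrary to your closing remark, you never actually use that the singular points are of the first kind (density of $M^n_{\rm{reg}}$ is part of the definition of a frontal, not a consequence of the first-kind condition, and your limiting step needs only $f(\Sigma_f)=\partial\Omega$ and continuity), nor do you use convexity beyond the connectedness of $\operatorname{int}\Omega$; so your argument proves the lemma for arbitrary frontals and arbitrary closed regions with connected nonempty interior. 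The paper's argument, by contrast, is local and quantitative, and exhibits exactly how the first-kind structure (transversal null direction, $f_{nn}(p_m)\neq\vect{0}$ by non-degeneracy) interacts with supporting hyperplanes, a computation in the same spirit as Lemma \ref{le:first-morse}. Two conventions you should state explicitly, both being the intended readings: ``closed convex domain'' means $\Omega$ has nonempty interior in $\mathcal{P}^n$, and $\partial\Omega$ is the boundary relative to $\mathcal{P}^n$; with those in place your argument is complete.
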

\medskip

\begin{proof}
We prove that $\Omega \cap f(M^n)^c$ is empty.
In order to show this, 
we assume that  $\Omega \cap f(M^n)^c$ is not empty, 
leading to a contradiction.
We fix $x \in \Omega \cap f(M^n)^c$ arbitrarily.
We define a function $\Phi_x$ on $M^n$ as $\Phi_x (p) = d(x , f(p))$ $(p \in M^n)$, 
where $d$ is the distance function of $\mathcal{P}^n$ induced from that of $\R^{n+r}$.
Then, there exists a minimum point $p_m \in M^n$ of the function $\Phi_x$.
Since $p_m$ is a critical point of $\Phi_x$,
the minimum point $p_m$ is a singular point of the first kind of $f$, 
and $\vect{y} =x - f(p_m)$ is perpendicular to $df_{p_m} (T_{p_m} M^n)$.
We note that $\vect{y} \neq \vect{0}$ holds by our assumption.
Since $p_m$ is a singular point of the first kind,
we can take the local coordinate neighborhood $(U; u_1 , \cdots, u_n )$ of $p_m$ such that 
$$
U\cap \Sigma_f = (u_1, \cdots , u_{n-1}, 0), \quad p_m = (0, \cdots , 0), \quad f_n ( u_1, \cdots , u_{n-1}, 0) = \vect{0}.
$$
For a positive number $\epsilon > 0$, we set an open interval $I \subset U$ by 
$$
I := \{ (0,\cdots,0,u_n) \in U \mid u_n \in (-\epsilon, \epsilon) \}.
$$
We denote by $\phi := \Phi_x |_I$ the restriction of $\Phi_x$ to $I$.
Since $p_m$ is non-degenerate, $f_{nn} (p_m) \neq \vect{0}$.
Therefore, we have $d^2 \phi / d u_n^2 (0) \neq 0$.
Since $\phi \, (0)$ is a minimum, $d^2 \phi / d u_n^2 (0) > 0$ holds.
In other words,  we have
\begin{equation}\label{eq:min1}
f_{nn} (p_m) \cdot \vect{y} < 0.
\end{equation}
On the other hand, since $\Omega$ is convex, $\Omega$  lies on one side of the supporting hyperplane $H_{p_m} := \{ f(p_m)+ df_{p_m} (X) \in \mathcal{P}^n \mid X \in T_{p_m} M^n \}$ (cf.\ Figure~\ref{fig:convex-domain}).
We set a unit vector $\xi$ by $\xi :=\vect{y} / \|\vect{y} \|$,
and denote by $h_{\xi}$ the height function with respect to $\xi$.
Then the restriction $h_{\xi} |_I$ has also a minimum at $u_n=0$.
Therefore, 
$$
f_{nn} (p_m) \cdot \xi = f_{nn} (p_m) \cdot \frac{\vect{y}}{\|\vect{y} \|}  > 0
$$
holds.
This contradicts \eqref{eq:min1}.
Hence,  $\Omega \cap f(M^n)^c$ is empty.
As a result, $f(M^n)$ coincides with $\Omega$.
\end{proof}

\begin{figure}[htb]
\centering
 \begin{tabular}{c}
\resizebox{6cm}{!}{\includegraphics{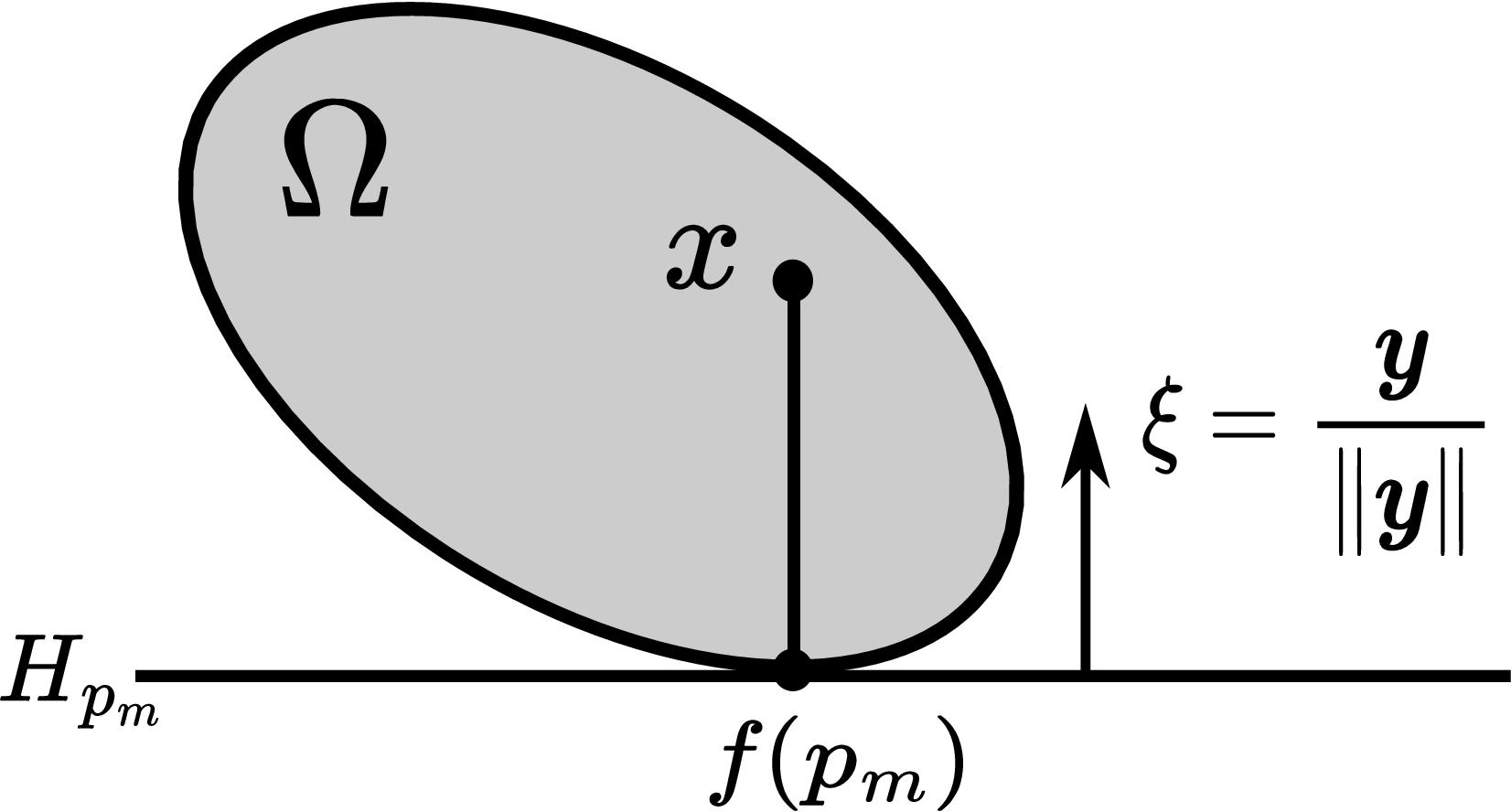}}
\end{tabular}  
\caption{The figure of closed convex domain $\Omega$  (Lemma~\ref{le:convex-domain}).}
\label{fig:convex-domain}
\end{figure}

Finally, we prove Theorem~\ref{thm:intro2}.

\begin{proof}[Proof of Theorem~\ref{thm:intro2}]
First, we assume that the total absolute curvature $\tau(M^n,f)$ is equal to $2$.
By Theorem~\ref{thm:intro1}, $M^n$ is homeomorphic to an $n$-sphere.
Since $\Sigma_f$ is non-empty and consists only of singular points of the first kind,
 $\bar f:\Sigma_f\to\R^{n+r}$ is an immersion
(cf.\ Remark~\ref{re:bar-f}).
Hence we can apply the original Chern--Lashof theorem
(Fact~\ref{fa:CL}) to $\bar f$.
Thus, Fact~\ref{fa:CL}-$(1)$ yields that the total absolute curvature of $\bar{f}$ is greater than or equal to $2$.
 Since $\tau(M^n,f) = 2$, we have
\begin{equation}\label{eq:tau-2}
 \int_{B } |G(p,\xi)|\, d \mu_B = 0, \quad
 (\,\tau(\Sigma_f, \bar{f}) =\,) 
 \frac{1}{\operatorname{vol} (S^{n+r-1})}
\int_{\bar{B} } |\bar{G}(q,\eta)|\, d \mu_{\bar{B}} = 2.
\end{equation}
Furthermore,  by Fact~\ref{fa:CL}-$(2)$ and $(3)$, 
$\Sigma_f$ is homeomorphic to an $(n-1)$-sphere and
the image $f(\Sigma_f)$ is a convex hypersurface embedded 
in an $n$-dimensional affine subspace $\mathcal{P}^n$.
Hence we obtain the condition (a).

Next, we prove the conditions (b) and (c).
Here, $f(\Sigma_f)$ is a convex hypersurface embedded 
in an $n$-dimensional affine subspace $\mathcal{P}^n$.
In addition, $G$ vanishes identically on $B_{\rm{reg}}$ by \eqref{eq:tau-2}.
By the argument used in the proof of Theorem~\ref{thm:intro1}-(3)
in Appendix~\ref{appendix:proof-theorem-a3}, 
it follows that $f(M^n_{\rm reg}) \subset \mathcal{P}^n$.
Since $\bar{f}:\Sigma_f \to \mathcal{P}^n$ is a convex hypersurface, 
$f(\Sigma_f)$ bounds a closed convex domain $\Omega$.
Now we prove that $f(M^n) = \Omega$.
First, we show that $f(M^n) \subset \Omega$.
In order to show this, we assume that $f(M^n) \not\subset \Omega$, 
leading to a contradiction.
We set a distance function $d_{\Omega}(p) := d(\Omega, f(p))$ $(p \in M^n)$.
By our assumption, 
the maximum $m$ of the function $d_{\Omega}$ is positive.
Let $p_{\rm{max}} \in M^n$ be a point attaining $m$.
If $p_{\rm{max}}$ is a singular point of $f$, 
then $f(p_{\rm{max}})$ is a point on the boundary of $\Omega$,
and hence $d_{\Omega}(p_{\rm{max}})=0$, 
which contradicts $d_{\Omega}(p_{\rm{max}})=m>0$.
So $p_{\rm{max}}$ is a regular point of $f$.
On the other hand, $p_{\rm{max}}$ is a critical point of $d_{\Omega}$.
Hence, $p_{\rm{max}}$ is a singular point of $f$, 
which contradicts that $p_{\rm{max}}$ is a regular point.
Therefore, $f(M^n)$ is a subset of $\Omega$.
Consequently, Lemma~\ref{le:convex-domain} yields that $f(M^n)$ coincides with $\Omega$.

Finally, we prove the converse.
Since $f(\Sigma_f)$ coincides with the boundary of the closed convex domain and $\Sigma_f$ is connected,
$\bar{f}$ is a convex hypersurface embedded in an $n$-dimensional affine subspace.
Therefore, the total absolute curvature of $\bar{f}$ is equal to $2$ by Fact~\ref{fa:CL}-$(3)$.
Since the image $f(M^n_{\rm{reg}})$ is contained in an $n$-dimensional affine subspace,
the Lipschitz-Killing curvature is equal to zero on $M^n_{\rm{reg}}$.
Consequently, the total absolute curvature $\tau(M^n,f)$ is equal to $2$.
\end{proof}

The following example gives a frontal satisfying the assumptions of
Theorem~\ref{thm:intro2} whose total absolute curvature is equal to $2$.

\medskip
\begin{example}\label{ex:surface-2}
		The map $f : S^n \to \R^{n+1}$ defined by
		$$
		f(x_1, \cdots , x_n , x_{n+1}) := (x_1, \cdots , x_n, 0) \qquad ((x_1, \cdots , x_{n+1}) \in S^n \subset \R^{n+1})
		$$
		is a frontal whose singular points are of the first kind
		( see Figure~\ref{fig:disk}; cf.\ \cite[Example 3.2]{Honda}). 
		 Thus $f$ is admissible.
		The singular set $\Sigma_f$ is an $(n-1)$-dimensional unit sphere $\{ (x_1, \cdots , x_n, x_{n+1}) \in S^n \mid x_{n+1} = 0 \}$.
		Since the image $f(S^n)$ is a closed unit $n$-ball in the $n$-dimensional subspace $\mathcal{P}^n = \{ (x_1,\cdots , x_n, 0) \in \R^{n+1} \}$, 
		the total absolute curvature $\int_B |G| \, d \mu_B =0$ holds.
		On the other hand, since the image $f(\Sigma_f)$ is a unit $(n-1)$-sphere, $\int_{\bar{B}} |\bar{G}| \, d \mu_{\bar{B}} = 2\operatorname{vol} (S^{n}) $ holds.
		Hence,  the regular contribution satisfies
		\(
		\int_B |G|\,d\mu_B=0.
		\)
		Therefore, $\tau(S^n,f)=2$.
		Moreover, the image $f(S^n)$ is a convex domain in $\mathcal{P}^n$ and the image $f(\Sigma_f)$ coincides with the boundary of $f(S^n)$.
\end{example}

\begin{figure}[htb]
\centering
 \begin{tabular}{c}
\resizebox{5cm}{!}{\includegraphics{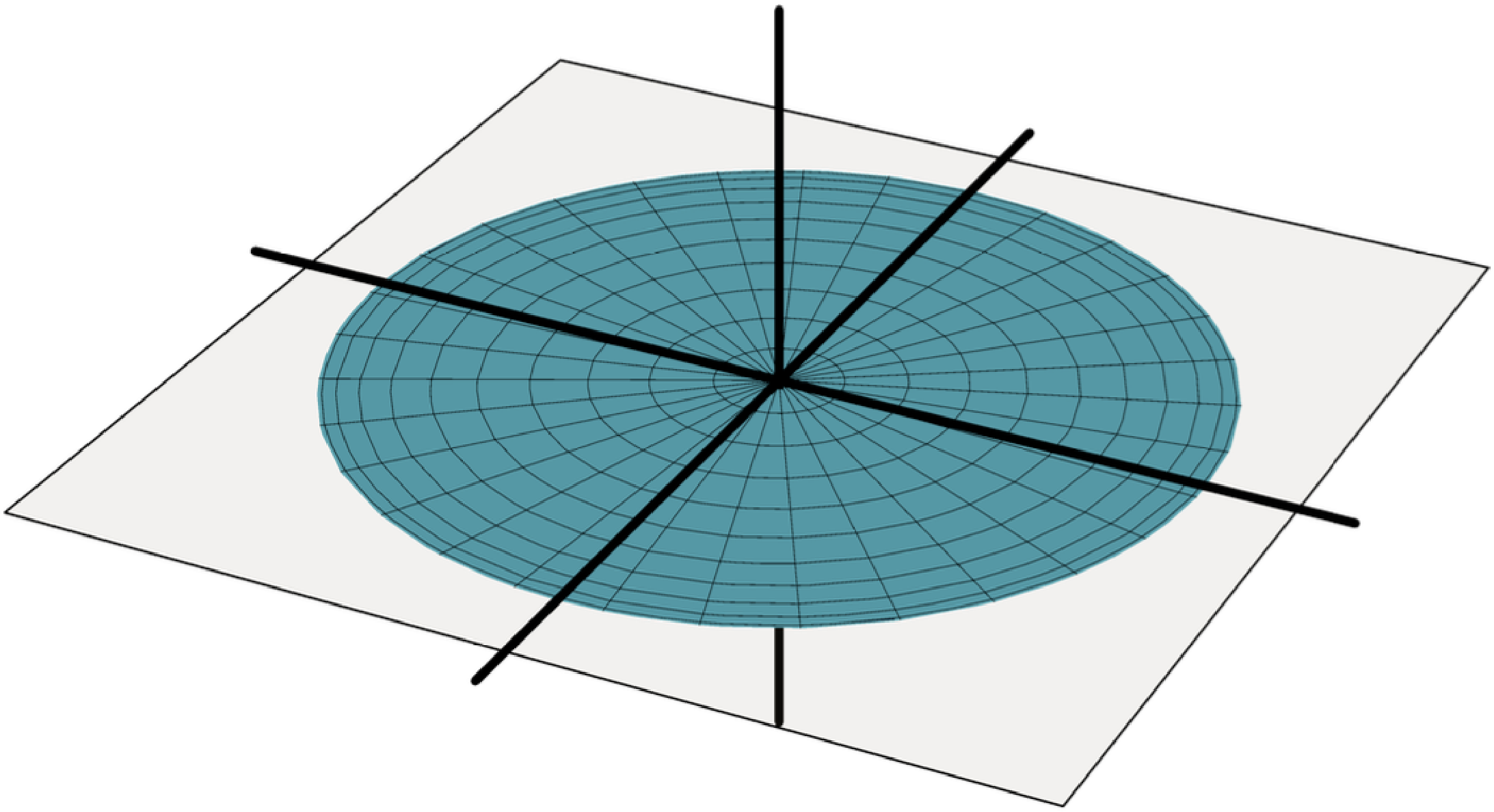}}
\end{tabular}  
\caption{The image $f(S^2)$ in $\R^3$  (Example~\ref{ex:surface-2}).}
\label{fig:disk}
\end{figure}

\section{ The $1$- and $2$-dimensional cases}
\label{sec:low-dimensional}

In this section, we exhibit the results obtained in the cases of $n=1,2$.

\subsection{In the case of $n=1$}

For a smooth map $\gamma: I \to \R^{r+1}$ defined on a non-empty interval $I$,
a point $c\in I$ is a singular point of $\gamma$ 
if and only if the derivative $\gamma'(c)$ vanishes, 
 where the prime $'$ denotes differentiation with respect to $t$.
A smooth map $\gamma: I \to \R^{r+1}$ whose regular set is dense in $I$ 
is a frontal if and only if there exists a smooth map $\vect{e}: I \to S^r$ 
such that $\gamma'(t)$ and $\vect{e}(t)$ are linearly dependent for each $t \in I$.
This map $\vect{e} (t)$ is said to be a {\it unit tangent vector field} along $\gamma$.
For a singular point $c \in I$ of a frontal $\gamma: I \to \R^{r+1}$,
if $\gamma''(c)$ does not vanish, the singular point $c$ is called a {\it generalized cusp} (cf. \cite{HNSUY}).
The notion of generalized cusps is introduced as {\it A-type} in \cite{HS}.
Generalized cusps are characterized by the non-degeneracy condition.

\medskip
\begin{proposition}\label{prop:generalized-cusp}
A singular point $c \in I$ of a frontal $\gamma: I \to \R^{r+1}$ is non-degenerate if and only if $c$ is a generalized cusp.
\end{proposition}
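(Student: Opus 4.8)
The plan is to compute the signed volume density function $\lambda$ explicitly in the curve case and compare the vanishing of $\lambda'$ at a singular point with the vanishing of $\gamma''$. The frontal condition supplies a unit tangent field $\vect{e}:I\to S^r$ with $\gamma'(t)$ parallel to $\vect{e}(t)$, so I would first write $\gamma'(t)=\mu(t)\,\vect{e}(t)$, where $\mu(t)=\gamma'(t)\cdot\vect{e}(t)$ is a smooth scalar function (the dot product with the unit vector $\vect{e}(t)$ recovers the coefficient). Choosing an orthonormal frame $\{E_1,\dots,E_r\}$ of $\Pi^{\bot}$ so that $\{\vect{e},E_1,\dots,E_r\}$ is a positively oriented orthonormal basis of $\R^{r+1}$, the signed volume density function becomes
$$
\lambda(t)=\det\bigl(\gamma'(t),E_1(t),\dots,E_r(t)\bigr)=\mu(t)\,\det\bigl(\vect{e}(t),E_1(t),\dots,E_r(t)\bigr)=\mu(t).
$$
Hence $\lambda$ coincides with $\mu$, and since non-degeneracy of a singular point does not depend on the choice of frame (as noted after Lemma \ref{le:rank}), a singular point $c$ is non-degenerate if and only if $\lambda'(c)=\mu'(c)\neq0$.

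Next I would differentiate the identity $\gamma'=\mu\,\vect{e}$ to obtain $\gamma''=\mu'\,\vect{e}+\mu\,\vect{e}'$. The crucial observation is that at a singular point $c$ we have $\gamma'(c)=\vect{0}$, hence $\mu(c)=0$ (because $\vect{e}(c)$ is a unit vector), so the second term drops out and
$$
\gamma''(c)=\mu'(c)\,\vect{e}(c).
$$
In other words, the second derivative at a singular point is automatically tangent, i.e.\ parallel to $\vect{e}(c)$. Since $\|\vect{e}(c)\|=1$, this immediately gives the equivalence $\gamma''(c)=\vect{0}\iff\mu'(c)=0$.

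Combining the two steps yields the desired chain: $c$ is non-degenerate $\iff\lambda'(c)=\mu'(c)\neq0\iff\gamma''(c)=\mu'(c)\,\vect{e}(c)\neq\vect{0}\iff c$ is a generalized cusp. The only point requiring care is the first step, namely identifying $\lambda$ with $\mu$ and invoking the frame-independence of non-degeneracy; everything else is a one-line differentiation. I do not expect any genuine obstacle, because the structural fact that $\gamma''(c)$ must lie on the tangent line $\Pi(c)$ at a singular point is precisely what makes the analytic condition $\lambda'(c)\neq0$ and the geometric condition $\gamma''(c)\neq\vect{0}$ manifestly equivalent.
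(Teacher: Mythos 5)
Your proof is correct, and it reaches the same computation as the paper by a slightly different route. The paper differentiates the determinant $\lambda(t)=\det(\gamma'(t),E_1(t),\dots,E_r(t))$ directly by the Leibniz rule: at a singular point the terms involving $E_j'$ carry the factor $\gamma'(c)=\vect{0}$, so $\lambda'(c)=\det(\gamma''(c),E_1(c),\dots,E_r(c))$, from which it concludes $\lambda'(c)\neq0\iff\gamma''(c)\neq\vect{0}$. Note that this last equivalence is not automatic for an arbitrary vector: $\det(\gamma''(c),E_1,\dots,E_r)$ vanishes whenever $\gamma''(c)$ lies in the normal space $\Pi^{\bot}(c)$, so the paper's conclusion silently uses the fact that $\gamma''(c)$ is tangential. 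Your decomposition $\gamma'=\mu\,\vect{e}$ makes exactly this point explicit: differentiating gives $\gamma''(c)=\mu'(c)\,\vect{e}(c)$ since $\mu(c)=0$, and your identification $\lambda=\mu$ in an adapted oriented frame (legitimate by the frame-independence of non-degeneracy noted after Lemma \ref{le:rank}) turns the whole proposition into the single equivalence $\mu'(c)\neq0$. So your argument is a valid variant that buys a fully justified final step where the paper's shorter proof leaves the tangency of $\gamma''(c)$ implicit; the paper's version, in exchange, avoids choosing a special frame and works with the determinant formula as stated in the definition of $\lambda$.
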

\medskip

\begin{proof}
Let $\lambda$ be the signed volume density function of $\gamma$.
The derivative $\lambda'(c)$ can be written as 
$$
\lambda'(c) = \det(\gamma'', E_1, \cdots , E_r) (c),
$$
where $\{ E_1, \cdots, E_r \}$ is an orthonormal frame of the normal bundle of $\gamma$.
Therefore, $\lambda'(c) \neq 0$ holds if and only if $\gamma''(c) \neq \vect{0}$.
\end{proof}

On the regular set of a frontal $\gamma: I \to \R^{r+1}$, the {\it curvature function} is defined as 
\begin{equation}\label{eq:curvature}
\kappa (t) := \frac{ \sqrt{ \|\gamma''\|^2 \| \gamma' \|^2 - (\gamma'' \cdot \gamma')^2 } } {\|\gamma'\|^3}.
\end{equation}
Since $\vect{e}(t) = \pm \gamma'(t) / \| \gamma'(t) \|$ holds on the regular set of $\gamma$, 
the curvature function $\kappa(t)$ can be written as $\kappa (t) =  \|\vect{e}'(t)\| / \|\gamma'(t)\|$.
Thus, $\kappa \, ds = \|\vect{e}' (t)\|\, dt$ holds, where $ds = \| \gamma'(t) \|\, dt$ is the arclength measure.
Although $\kappa$ is defined only on the regular set, the measure
$\kappa\,ds$ extends smoothly across non-degenerate singular points by
\[
\kappa\,ds=\|\vect e'(t)\|\,dt.
\]
In what follows, the integral $\int_{S^1}\kappa\,ds$ is understood in this sense.

If a frontal $\gamma:\R \to \R^{r+1}$ is a periodic map, $\gamma$ is called a {\it closed frontal}.
After rescaling the parameter, we may assume that $\gamma$ is $2\pi$-periodic.
Then, the domain of definition of $\gamma$ is regarded as $S^1 = \R / 2\pi \boldsymbol{Z}$.
A closed frontal $\gamma:S^1 \to \R^{r+1}$ is a co-orientable frontal if and only if the unit tangent vector field $\vect{e}:\R \to \R^{r+1}$ is a $2\pi$-periodic map, that is, 
$$
\vect{e} (t + 2 \pi) = \vect{e} (t) \quad (t \in \R).
$$

In the case of $n=1$, an admissible frontal $\gamma$ has no singular points of the second kind.
Thus, a frontal $\gamma$ is an admissible frontal if and only if singular points of $\gamma$ are non-degenerate.
Indeed, at a non-degenerate singular point, $\Sigma_\gamma$ is discrete, and hence
$T_c\Sigma_\gamma=\{0\}$, whereas $\ker d\gamma_c$ is one-dimensional.
Thus every non-degenerate singular point is of the first kind.
Then, the singular set $\Sigma_{\gamma}$ of a closed admissible frontal $\gamma:S^1\to \R^{r+1}$ is a discrete subset of $S^1$.
Hence, we may define the total absolute curvature for a closed frontal whose singular points are non-degenerate.
The total absolute curvature can be written 
using the curvature function $\kappa$ of $\gamma$.

\medskip
\begin{lemma}\label{lem:TAC-curve}
For a closed frontal $\gamma: S^1 \to \R^{r+1}$ whose singular points are generalized cusps,  
the total absolute curvature can be written as
$$
\tau(S^1,\gamma) = \frac{1}{\pi} \int_{S^1} \kappa \, ds + \# \Sigma_{\gamma}.
$$
\end{lemma}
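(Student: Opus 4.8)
The plan is to evaluate separately the two summands in Definition \ref{def:TAC}, namely the regular contribution $\frac{1}{\operatorname{vol}(S^{r})}\int_{B}|G|\,d\mu_{B}$ (recall $n+r-1=r$ when $n=1$) and the singular contribution $\frac{1}{\operatorname{vol}(S^{r})}\int_{\bar B}|\bar G|\,d\mu_{\bar B}$, and to show that these equal $\frac1\pi\int_{S^1}\kappa\,ds$ and $\#\Sigma_\gamma$, respectively. Since $\gamma$ is closed and its singular points are non-degenerate, hence isolated, the set $\Sigma_\gamma$ is finite, so $\#\Sigma_\gamma<\infty$ and all the integrals below converge.

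For the regular term I would first compute the Lipschitz--Killing curvature. With $n=1$ the shape operator $A_\xi$ is a scalar, so $G(t,\xi)=A_\xi$; applying the Weingarten formula \eqref{eq:Weingarten} to $X=\partial/\partial t$ and using $\xi\cdot\vect{e}=0$ gives $G(t,\xi)=(\xi\cdot\vect{e}\,'(t))/\|\gamma'(t)\|$, where $\vect{e}$ is the unit tangent field. Since $|\lambda|=\|\gamma'\|$ for a curve, one has $d\mu_{B}=\|\gamma'\|\,dt\wedge d\sigma$, and the factor $\|\gamma'\|$ cancels, leaving
$$
\int_{B}|G|\,d\mu_{B}=\int_{S^1}\left(\int_{B_t}|\xi\cdot\vect{e}\,'(t)|\,d\sigma(\xi)\right)dt.
$$
Because $\vect{e}\,'$ is orthogonal to $\vect{e}$, it lies in $\Pi^{\bot}(t)\cong\R^{r}$, the fiber $B_t$ is the unit sphere $S^{r-1}$ of that space, and by rotational invariance of $d\sigma$ the inner integral equals $C_r\,\|\vect{e}\,'(t)\|$ with $C_r:=\int_{S^{r-1}}|\xi\cdot\vect{w}|\,d\sigma(\xi)$ for any fixed unit $\vect{w}$. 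Using $\kappa\,ds=\|\vect{e}\,'(t)\|\,dt$ this gives $\int_{B}|G|\,d\mu_{B}=C_r\int_{S^1}\kappa\,ds$. The key constant is $C_r=\operatorname{vol}(S^{r})/\pi$, which I would verify from the standard spherical integral $\int_{S^{r-1}}|\xi\cdot\vect{w}|\,d\sigma=\tfrac{2}{r-1}\operatorname{vol}(S^{r-2})$ (directly $=2$ when $r=1$) together with the identity $(r-1)\Gamma(\tfrac{r-1}{2})=2\Gamma(\tfrac{r+1}{2})$. Dividing by $\operatorname{vol}(S^{r})$ then yields exactly $\frac1\pi\int_{S^1}\kappa\,ds$.

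For the singular term, note that at a non-degenerate singular point $p$ one has $\operatorname{rank} d\gamma_p=0$ by Lemma \ref{le:rank}, so $\bar\Pi(p)=d\gamma_p(T_pS^1)=\{\vect0\}$ and $\bar\Pi^{\bot}(p)=\R^{r+1}$; hence the fiber $\bar B_p$ is the full unit sphere $S^{r}$. As $\Sigma_\gamma$ is $0$-dimensional, the shape operator $\bar A_\eta$ is the empty operator, so $\bar G\equiv 1$, while $d\mu_{\bar B}$ restricts on $\{p\}\times S^{r}$ to the volume element $d\sigma$ of $S^{r}$, and there the canonical Gauss map $\bar\nu\colon(p,\eta)\mapsto\eta$ is the identity onto $S^{r}$. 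Thus each singular point contributes $\int_{\{p\}\times S^{r}}|\bar G|\,d\mu_{\bar B}=\operatorname{vol}(S^{r})$, so summing over the finite set $\Sigma_\gamma$ gives $\int_{\bar B}|\bar G|\,d\mu_{\bar B}=\#\Sigma_\gamma\cdot\operatorname{vol}(S^{r})$, and dividing by $\operatorname{vol}(S^{r})$ produces $\#\Sigma_\gamma$. Adding the two contributions gives the asserted formula. The main obstacle will be the bookkeeping around the constant $C_r$ — getting the spherical integral and the Gamma-function identity to collapse to exactly $\operatorname{vol}(S^{r})/\pi$ — together with pinning down the correct interpretation of $\bar G$ over the $0$-dimensional singular set; once these are settled the remaining steps are routine.
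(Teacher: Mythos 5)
Your proposal is correct, and for the singular contribution it takes a genuinely different route from the paper. The paper does not evaluate $\int_{\bar B}|\bar G|\,d\mu_{\bar B}$ directly: it starts from the identity established in the proof of Theorem \ref{thm:intro1}-(1), writing $\tau(S^1,\gamma)$ as the average over $\vect{w}\in S^{r}\setminus Q$ of the number of critical points of $h_{\vect{w}}$, split between $S^1_{\mathrm{reg}}$ and $\Sigma_\gamma$; since $\gamma'$ vanishes at every singular point of a curve, each singular point is a critical point of \emph{every} height function, so the singular average is exactly $\#\Sigma_\gamma$ with no degenerate-dimension bookkeeping needed. You instead evaluate the second term of Definition \ref{def:TAC} over the $0$-dimensional base $\Sigma_\gamma$, with the conventions $\bar\Pi(p)=\{\vect{0}\}$, $\bar B_p=S^{r}$, $\bar G\equiv 1$ (determinant of the empty operator), and $d\mu_{\bar B}=d\sigma$; these are the natural degenerate-case readings of the paper's definitions, are consistent with Proposition \ref{prop:pullback} for $n-1=0$, and yield $\#\Sigma_\gamma\cdot\operatorname{vol}(S^{r})$, matching the paper's answer. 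For the regular contribution both arguments reduce to the same fiber integral $\int_{S^1}\int_{S^{r-1}}|\xi\cdot\vect{e}'|\,d\sigma\,dt$ — your Weingarten computation $G(t,\xi)=(\xi\cdot\vect{e}'(t))/\|\gamma'(t)\|$ together with $|\lambda|=\|\gamma'\|$ is exactly what underlies the paper's displayed identity — but the paper leaves the normalization implicit, whereas you verify $C_r=\tfrac{2}{r-1}\operatorname{vol}(S^{r-2})=\operatorname{vol}(S^{r})/\pi$ (with $C_1=2$), which indeed checks out via the stated Gamma-function identity. The trade-off: your argument is self-contained within Definition \ref{def:TAC}, bypasses Morse theory entirely, and pins down the constant explicitly; the paper's route avoids committing to $0$-dimensional conventions for $\bar f$ and reuses machinery already set up, at the cost of resting on the critical-point/curvature identification from \cite{CL1}. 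Your appeals to Corollary \ref{cor:curvature-form} and to the finiteness of $\Sigma_\gamma$ (from non-degeneracy, so that integrating over $B$ rather than $B_{\mathrm{reg}}$ is harmless) are also sound.
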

\medskip

\begin{proof}
As shown in the proof of Theorem~\ref{thm:intro1}-(1),
we have
$$
\tau(S^1,\gamma) 
= \frac{1}{\operatorname{vol} (S^{r})} \int_{S^{r}\setminus Q} \# \operatorname{crit} (h_{\vect{w}},S^1_{\rm{reg}})\, d \mu_{S^{r}} 
+ \frac{1}{\operatorname{vol} (S^{r})} \int_{S^{r}\setminus Q} \# \operatorname{crit} (h_{\vect{w}},\Sigma_{\gamma})\, d \mu_{S^{r}}.
$$
Let $B$ be the unit normal bundle and $\nu$ be the canonical Gauss map of $\gamma$.
Then, 
$$
\frac{1}{\operatorname{vol} (S^{r})} \int_B |G|\, d \mu_B =\frac{1}{\operatorname{vol} (S^{r})} \int_{S^1} \int_{S^{r-1}} |\nu \cdot \vect{e}'|\, d \sigma dt 
= \frac{1}{\pi} \int_{S^1} \kappa \, ds
$$
holds, where $d\sigma$ is the volume element of $S^{r-1}$.
Therefore, we obtain
$$
\frac{1}{\operatorname{vol} (S^{r})} \int_{S^{r}\setminus Q} \# \operatorname{crit} (h_{\vect{w}},S^1_{\rm{reg}})\, d \mu_{S^{r}}  
=
\frac{1}{\pi} \int_{S^1} \kappa \, ds.
$$
Since all singular points are critical points of the height function $h_{\vect{w}}$ for each $\vect{w} \in S^{r}\setminus Q$,
we obtain 
$$
\frac{1}{\operatorname{vol} (S^{r})} \int_{S^{r}\setminus Q} \# \operatorname{crit} (h_{\vect{w}},\Sigma_{\gamma})\, d \mu_{S^{r}}
=
\# \Sigma_{\gamma}.
$$
Thus, we have the desired result.
\end{proof}

By Theorems~\ref{thm:intro1}, \ref{thm:intro2} and Lemma~\ref{lem:TAC-curve}, 
we have the following corollary in the case of $n=1$.

\medskip
\begin{corollary}\label{cor:fenchel}
For a co-orientable closed frontal $\gamma: S^1 \to \R^{r+1}$ whose singular points are generalized cusps,  
$$
\frac{1}{\pi} \int_{S^1} \kappa \, ds + \# \Sigma_{\gamma} \geq 2
$$
holds. Moreover, if $\Sigma_{\gamma}$ is not empty, then the total absolute curvature of $\gamma$ is equal to $2$ 
if and only if $\# \Sigma_{\gamma}$ is equal to $2$ and the image of $\gamma$ is a line segment.
\end{corollary}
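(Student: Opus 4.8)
The plan is to deduce everything from Lemma~\ref{lem:TAC-curve} together with Theorems~\ref{thm:intro1} and~\ref{thm:intro2}, after recording that in the present setting $\gamma$ is admissible with all singularities of the first kind. Indeed, since the singular points of $\gamma$ are non-degenerate, $\gamma$ is an admissible frontal and $\Sigma_\gamma$ is a discrete (hence $0$-dimensional) subset of $S^1$; consequently $T_p\Sigma_\gamma=\{\vect{0}\}$ for every $p\in\Sigma_\gamma$, so the $1$-dimensional null space $\ker d\gamma_p$ can never be contained in $T_p\Sigma_\gamma$ and $p$ is of the first kind. This lets me invoke both theorems freely.

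For the inequality, I would simply combine Lemma~\ref{lem:TAC-curve}, which gives $\tau(S^1,\gamma)=\frac{1}{\pi}\int_{S^1}\kappa\,ds+\#\Sigma_\gamma$, with Theorem~\ref{thm:intro1}-(1) applied to $M^1=S^1$. Since $b_0(S^1)=b_1(S^1)=1$, the right-hand side of Theorem~\ref{thm:intro1}-(1) equals $2$, and the stated bound $\frac{1}{\pi}\int_{S^1}\kappa\,ds+\#\Sigma_\gamma\ge 2$ follows at once; the argument is valid whether or not $\Sigma_\gamma$ is empty.

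For the forward implication of the ``moreover'' part, suppose $\Sigma_\gamma\neq\emptyset$ and $\tau(S^1,\gamma)=2$. I apply Theorem~\ref{thm:intro2} with $n=1$: conditions (a)--(c) must hold. Specializing (a), the singular set $\Sigma_\gamma$ is homeomorphic to $S^{0}$, i.e.\ consists of exactly two points, so $\#\Sigma_\gamma=2$; specializing (b), the image $\gamma(S^1)$ is a closed convex domain of a $1$-dimensional affine subspace, and since $\gamma(S^1)$ is compact (and non-degenerate, the regular set being dense), this convex domain is a bounded closed interval, that is, a line segment. This gives both required conclusions.

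For the converse, assume $\#\Sigma_\gamma=2$ and $\gamma(S^1)$ is a line segment contained in a line $\ell$ with unit direction $\vect{d}$. On the regular set the velocity $\gamma'$ is a nonzero vector tangent to $\ell$, hence $\gamma'\parallel\vect{d}$; since the unit tangent field satisfies $\vect{e}=\pm\gamma'/\|\gamma'\|$, we get $\vect{e}=\pm\vect{d}$, and by continuity $\vect{e}$ is constant on each regular arc. Therefore $\vect{e}'\equiv\vect{0}$ and $\kappa=\|\vect{e}'\|/\|\gamma'\|\equiv 0$ on the regular set, whence $\int_{S^1}\kappa\,ds=0$. Lemma~\ref{lem:TAC-curve} then yields $\tau(S^1,\gamma)=0+\#\Sigma_\gamma=2$, as desired. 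The only step requiring genuine care is this last one, where I must track the sign of $\vect{e}$ across the regular arcs; but the vanishing of $\kappa$ on each arc makes $\vect{e}$ locally constant and the conclusion is immediate. Everything else is a direct specialization of Theorems~\ref{thm:intro1} and~\ref{thm:intro2} to $n=1$, so I do not anticipate a substantial obstacle.
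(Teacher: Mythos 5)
Your proposal is correct and follows exactly the route the paper intends, which states the corollary as a direct consequence of Theorems~\ref{thm:intro1}, \ref{thm:intro2} and Lemma~\ref{lem:TAC-curve}: your observation that non-degeneracy forces every singular point to be of the first kind when $n=1$ (since $T_p\Sigma_\gamma=\{\vect{0}\}$ cannot contain the one-dimensional null space) matches the paper's remark that admissible frontals with $n=1$ have no second-kind singularities, and the inequality and forward implication are the same specializations the paper has in mind. Your converse via the direct computation $\kappa\equiv 0$ is a nice touch, since it sidesteps verifying condition (c) of Theorem~\ref{thm:intro2} (that $\gamma(\Sigma_\gamma)$ is the pair of endpoints), which the stated hypotheses of the corollary do not explicitly include.
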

\medskip

If the singular set $\Sigma_{\gamma}$ is empty, that is, $\gamma$ is a regular curve, then Corollary~\ref{cor:fenchel} is equivalent to Fenchel's theorem \cite{Fenchel1929, Fenchel1951}.
Furthermore, 
the frontal $\gamma$ is co-orientable if and only if 
$\#\Sigma_{\gamma}$ is an even integer. 
Indeed, since $\lambda$ changes its sign at each generalized cusp, 
the number of generalized cusps is even.
Hence, if the singular set $\Sigma_{\gamma}$ is not empty, $\# \Sigma_{\gamma}$ is greater than or equal to $2$.
In \cite{HTY}, the total absolute curvature of non-co-orientable closed frontals is investigated.

\subsection{In the case of $n=2$}
In this subsection, to compare with the result of Kossowski and Scherfner \cite{KS}, we consider frontals in $\R^3$.
A smooth map $f:M^2 \to \R^3$ whose regular set is dense in $M^2$ is a 
(co-orientable) frontal if and only if 
there exists a smooth map $N: M^2 \to S^2$ such that
$$
df_p (X) \cdot N(p) = 0 \quad (p \in M^2, \, X \in T_p M^2).
$$
Furthermore, a frontal $f:M^2 \to \R^3$ is called a {\it wave front} 
if $\mathcal{L} := (f,N) : M^2 \to \R^3 \times S^2$ is an immersion.
If all singular points are non-degenerate, 
the singular set $\Sigma_f$ is a disjoint union of embedded regular curves in $M^2$.
Therefore, a frontal $f:M^2 \to \R^3$ whose singular points are non-degenerate is an admissible frontal 
if and only if the set $C_f$ of singular points of the second kind is a discrete subset of $\Sigma_f$.

\medskip
\begin{remark}\label{re:admissible}
In \cite{USY}, if a singular point $p \in C_f$ of the second kind is not an accumulation point of $C_f$, then $p$ is said to be {\it admissible}. 
Namely, there are no singular points other than singular points of the first kind near admissible singular points of the second kind. 
Our admissible frontal of Definition~\ref{def:admissible} is a generalization of this definition.
\end{remark}
\medskip

For an admissible frontal $f:M^2 \to \R^3$, the singular set $\Sigma_f$ is a disjoint union of embedded regular curves in $M^2$.
We let $K$ be the Gaussian curvature of  $f$ and
define the curvature function $\kappa$ as in \eqref{eq:curvature} for $\bar{f} : \Sigma_f \to \R^3$.
Kossowski and Scherfner proved the following Chern-Lashof type theorem for wave fronts.

\medskip
\begin{fact}[\cite{KS}]\label{fa:ks}
Let $M^2$ be an orientable compact $2$-dimensional manifold 
and $f:M^2 \to \R^3$ be a co-orientable wave front such that all singular points are non-degenerate 
and the set of singular points of the second kind is a discrete subset of the singular set $\Sigma_f$.
Then,
$$
\int_{M^2} |K|\, dV + 2 \int_{\Sigma_f} \kappa \, ds  \geq 2 \pi ( 2 + 2 g (M^2))
$$
holds, where $g(M^2)$ is the genus of $M^2$.
\end{fact}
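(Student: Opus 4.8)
The plan is to deduce Fact \ref{fa:ks} as the two–dimensional specialization ($n=2$, $r=1$) of Theorem \ref{thm:intro1}-(1), after rewriting the total absolute curvature $\tau(M^2,f)$ of Definition \ref{def:TAC} in terms of the Gaussian curvature $K$ of $f$ and the curvature function $\kappa$ of the boundary curve $\bar f$. The hypotheses of Fact \ref{fa:ks} are exactly what is needed to make $f$ an admissible frontal: the singular points are non-degenerate by assumption, and since $\dim\Sigma_f=1$ a regular hypersurface of $\Sigma_f$ is a discrete set of points, so the discreteness of $C_f$ lets us take $\mathcal{H}=C_f$ in condition $(2)$ of Definition \ref{def:admissible}. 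Hence Theorem \ref{thm:intro1}-(1) applies. As $M^2$ is a compact orientable surface of genus $g$, its Betti numbers are $b_0=b_2=1$ and $b_1=2g$, so $\sum_{i=0}^2 b_i(M^2)=2+2g(M^2)$, and therefore $\tau(M^2,f)\ge 2+2g(M^2)$.

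Next I would evaluate the two integrals defining $\tau(M^2,f)$, recalling that $\operatorname{vol}(S^{n+r-1})=\operatorname{vol}(S^2)=4\pi$. For the regular term, with $r=1$ the unit normal bundle $B$ is the double cover $\{(p,\pm N(p))\}$ of $M^2$, and $|G(p,\pm N)|=|\det A_{\pm N}|=|K(p)|$ on both sheets; integrating over the fibre thus yields $\int_B|G|\,d\mu_B=2\int_{M^2}|K|\,dV$, so that $\tfrac{1}{\operatorname{vol}(S^2)}\int_B|G|\,d\mu_B=\tfrac{1}{2\pi}\int_{M^2}|K|\,dV$. For the singular term, $\bar f:\Sigma_f\to\R^3$ is a disjoint union of closed space curves whose only singular points are the discrete set $C_f$ (which has measure zero in $\Sigma_f$ and contributes nothing to the integral). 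Applying the fibre computation carried out in the proof of Lemma \ref{lem:TAC-curve} to these space curves in $\R^3$ (for which the normal fibre is $S^1$ and the normalizing volume is $\operatorname{vol}(S^2)=4\pi$) gives $\tfrac{1}{\operatorname{vol}(S^2)}\int_{\bar B}|\bar G|\,d\mu_{\bar B}=\tfrac{1}{\pi}\int_{\Sigma_f}\kappa\,ds$. Combining the two contributions,
$$
\tau(M^2,f)=\frac{1}{2\pi}\int_{M^2}|K|\,dV+\frac{1}{\pi}\int_{\Sigma_f}\kappa\,ds.
$$

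Finally I would feed this identity into the inequality $\tau(M^2,f)\ge 2+2g(M^2)$ and multiply through by $2\pi$, obtaining
$$
\int_{M^2}|K|\,dV+2\int_{\Sigma_f}\kappa\,ds\ge 2\pi\bigl(2+2g(M^2)\bigr),
$$
which is the asserted inequality. The only delicate points are purely bookkeeping: the factor $2$ coming from the two sheets of $B$, and the normalization constant $1/\pi$ (rather than $1/(2\pi)$) for the boundary curve term, both of which are forced by the fibre integrations appearing in Proposition \ref{prop:pullback} and in the proof of Lemma \ref{lem:TAC-curve}. I expect these normalization constants, together with the verification that $\bar f$ is an honest closed frontal whose singular set coincides with the discrete set $C_f$, to be the main places where care is required.
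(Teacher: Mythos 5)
Your proposal is correct, and the bookkeeping checks out: with $n=2$, $r=1$ the fibre of $B$ is $S^0$, so $\int_B |G|\,d\mu_B = 2\int_{M^2}|K|\,dV$, while for the curve $\bar f:\Sigma_f \to \R^3$ the fibre integral $\int_{S^1}|\cos\theta|\,d\theta = 4$ against the normalizer $\operatorname{vol}(S^2)=4\pi$ gives the coefficient $1/\pi$, whence
\begin{equation*}
\tau(M^2,f)=\frac{1}{2\pi}\int_{M^2}|K|\,dV+\frac{1}{\pi}\int_{\Sigma_f}\kappa\,ds \;\geq\; b_0+b_1+b_2 = 2+2g(M^2),
\end{equation*}
and multiplying by $2\pi$ yields the stated inequality. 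One structural remark: the paper does not prove Fact \ref{fa:ks} at all --- it is imported from \cite{KS}, whose original argument is an intrinsic one specific to wave fronts in $\R^3$ --- so strictly speaking your route differs from the cited proof; but it coincides exactly with the paper's own derivation of Corollary \ref{cor:surface}, which subsumes Fact \ref{fa:ks} since a wave front is in particular a frontal, and which the paper likewise obtains by specializing Theorem \ref{thm:intro1} after identifying $\tau(M^2,f)$ with the two curvature integrals. There is no circularity: Theorem \ref{thm:intro1} is proved by Morse theory of height functions, independently of \cite{KS}. Your verification of admissibility (non-degeneracy plus the equivalence, noted in the paper, of condition (2) of Definition \ref{def:admissible} with discreteness of $C_f$ when $n=2$) is the right one; the only point you pass over quickly is why discarding the finitely many points of $C_f$ from the boundary term is harmless --- this follows because $|\bar G|\,d\mu_{\bar B}$ extends continuously across them (the analogue of Corollary \ref{cor:curvature-form} for $\bar f$), equivalently because $\kappa\,ds=\|\vect{e}'(t)\|\,dt$ remains bounded at the singular points of $\bar f$, so the integral $\int_{\Sigma_f}\kappa\,ds$ is finite and unaffected. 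What your route buys over the original \cite{KS} argument is generality (it works for frontals, not only wave fronts) and, as the paper emphasizes, access to the equality/convexity discussion via Theorem \ref{thm:intro2}; what it costs is reliance on the full machinery of Sections \ref{sec:frontal} and \ref{sec:proof} rather than a self-contained surface-theoretic proof.
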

\medskip

By Theorems A and B, we have the following corollary in the case of $n=2$ and $r=1$.

\medskip
\begin{corollary}\label{cor:surface}
Let $M^2$ be an orientable compact $2$-dimensional manifold and 
$f: M^2 \to \R^3$ be a co-orientable frontal such that all singular points are non-degenerate and 
the set of singular points of the second kind is a discrete subset of the singular set $\Sigma_f$.
Then, 
$$
\int_{M^2} |K|\, dV + 2 \int_{\Sigma_f} \kappa \, ds  \geq 2 \pi ( 2 + 2 g (M^2))
$$
holds, where $g(M^2)$ is the genus of $M^2$.
In particular, 
\begin{equation}\label{eq:ineq-2}
\int_{M^2} |K|\, dV + 2 \int_{\Sigma_f} \kappa \, ds  \geq 4 \pi
\end{equation}
holds.
Moreover, suppose that the singular set $\Sigma_f$ is not empty and singular points are of the first kind. 
If equality of \eqref{eq:ineq-2} holds then $M^2$ is homeomorphic to a $2$-dimensional sphere, and the image $f(M^2)$ is a closed convex domain of an affine plane of $\R^3$.
\end{corollary}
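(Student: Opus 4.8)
Looking at Corollary \ref{cor:surface}, this is the $n=2$, $r=1$ specialization of Theorems A and B, combined with the curvature formula from Lemma \ref{lem:TAC-curve}. Let me plan the proof.

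The plan is to specialize the computation of the total absolute curvature from Definition \ref{def:TAC} to the case $n=2$, $r=1$, and then feed the outcome into Theorems \ref{thm:intro1} and \ref{thm:intro2}. Since $n+r=3$ we have $\operatorname{vol}(S^{n+r-1})=\operatorname{vol}(S^2)=4\pi$, so the whole corollary reduces to evaluating the two integrals appearing in $\tau(M^2,f)$ and rewriting them in terms of the Gaussian curvature $K$ and the curvature function $\kappa$.

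First I would evaluate the regular contribution $\frac{1}{4\pi}\int_B |G|\,d\mu_B$. When $r=1$ the normal space $\Pi^{\bot}(p)$ is one-dimensional, so each fiber $B_p$ consists of the two unit normals $\pm N(p)$; since $A_{-N}=-A_N$ and $n=2$ is even, one has $G=\det A_{\pm N}=K$ on both sheets. Integration over the two-point fiber produces a factor of $2$, giving $\frac{1}{4\pi}\int_B|G|\,d\mu_B=\frac{1}{2\pi}\int_{M^2}|K|\,dV$. Next I would treat the singular contribution $\frac{1}{4\pi}\int_{\bar B}|\bar G|\,d\mu_{\bar B}$: here $\bar f:\Sigma_f\to\R^3$ is a space curve, so $\bar B$ has circle fibers, and the same fiber integration carried out in the proof of Lemma \ref{lem:TAC-curve} (for a curve in $\R^{r+1}$ with $r=2$, whence $\operatorname{vol}(S^r)=4\pi$) yields $\frac{1}{4\pi}\int_{\bar B}|\bar G|\,d\mu_{\bar B}=\frac{1}{\pi}\int_{\Sigma_f}\kappa\,ds$, where $\kappa$ is the curvature function of $\bar f$. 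Because the second-kind locus $C_f$ is a discrete, hence measure-zero, subset of $\Sigma_f$, it does not affect this integral, so the identity holds for every admissible frontal covered by the corollary. Combining the two pieces gives $\tau(M^2,f)=\frac{1}{2\pi}\int_{M^2}|K|\,dV+\frac{1}{\pi}\int_{\Sigma_f}\kappa\,ds$.

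With this formula in hand the main inequality is immediate from Theorem \ref{thm:intro1}-(1): for an orientable compact connected surface of genus $g$ one has $\sum_{i=0}^2 b_i(M^2)=1+2g+1=2+2g$, so $\tau(M^2,f)\geq 2+2g(M^2)$. Multiplying through by $2\pi$ converts this into $\int_{M^2}|K|\,dV+2\int_{\Sigma_f}\kappa\,ds\geq 2\pi(2+2g(M^2))$, and since $g(M^2)\geq 0$ the weaker bound \eqref{eq:ineq-2} follows at once. For the equality case I would observe that equality in \eqref{eq:ineq-2} is exactly the statement $\tau(M^2,f)=2$; under the added hypotheses that $\Sigma_f$ is non-empty and all singular points are of the first kind, Theorem \ref{thm:intro2} applies verbatim with $n=2$, its conclusion (a) giving $M^2\cong S^2$ and conclusions (b)--(c) identifying $f(M^2)$ with a closed convex domain of a $2$-dimensional affine subspace—an affine plane—of $\R^3$. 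The only delicate point is the bookkeeping of the normalizing constants in the two fiber integrals: once the factor $2$ coming from the doubled set of unit normals and the $\operatorname{vol}(S^2)=4\pi$ normalization for the space curve $\bar f$ are pinned down correctly, the corollary is a direct translation of Theorems A and B into the language of $K$ and $\kappa$.
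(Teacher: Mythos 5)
Your proposal is correct and follows exactly the route the paper intends: the paper offers no separate proof, stating only that the corollary follows ``By Theorems A and B'' in the case $n=2$, $r=1$, and your fiber-integration bookkeeping (the factor $2$ from the two-point fibers $\pm N$ giving $\frac{1}{2\pi}\int_{M^2}|K|\,dV$, and the Lemma \ref{lem:TAC-curve}-style computation giving $\frac{1}{\pi}\int_{\Sigma_f}\kappa\,ds$) correctly supplies the omitted details before invoking Theorem \ref{thm:intro1}-(1) for the inequality and Theorem \ref{thm:intro2} for the equality case. You also rightly note that the corollary's hypotheses are equivalent to admissibility when $n=2$ (as the paper observes just before Remark \ref{re:admissible}), so Theorems A and B indeed apply.
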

\medskip

Corollary~\ref{cor:surface} gives a generalization of Fact~\ref{fa:ks} to frontals.
Furthermore, 
our generalization to frontals clarifies the relationship between the equality case and convexity.

\appendix
\renewcommand{\thesection}{\Roman{section}}

\refstepcounter{section}
\section*{Appendix \thesection. Proof of Theorem~\ref{thm:intro1}-(3)}
\addcontentsline{toc}{section}{Appendix \thesection. Proof of Theorem~\ref{thm:intro1}-(3)}
\label{appendix:proof-theorem-a3}

In this appendix, we give a proof of Theorem~\ref{thm:intro1}-(3).

\medskip
If $r=1$, the assertion is trivial.
Thus, we assume that $r \geq 2$.

First, we show that $f(M^n)$  is contained in 
an $(n+r-1)$-dimensional affine subspace of $\R^{n+r}$. 
In order to show this, we assume that 
$f(M^n)$ is not contained in any $(n+r-1)$-dimensional affine subspace of $\R^{n+r}$, leading to a contradiction.

First, we prove by contradiction that $G(p,\xi)$ vanishes identically on $B_{\rm reg}$.  
(Our proof is a modification of that of \cite[Lemma~1]{CL1}.)  
Suppose that there exists $(p,\xi) \in B_{\rm reg}$ such that $G(p,\xi) \ne 0$.
	We take an orthonormal frame $\{ \xi_1, \cdots , \xi_{r-1}, \xi \}$ of $\Pi^{\bot}(p)$.
	Let $\xi_\theta$ be a unit vector defined by
	$$
	\xi_\theta  := \cos \theta \, \xi + \sin \theta \, \xi_1 \quad (\theta \in [0,2\pi)).
	$$
	Then, we define a function $\rho(\theta):=G(p,\xi_\theta)$.
	Since $G(p,\xi) \neq 0$, the function $\rho(\theta)$ does not vanish identically.
	Let $H_\theta$ be the hyperplane through $f(p)$ perpendicular to $\xi_\theta$.
	We define $\operatorname{pr}: \R^{n+r} \to \mathcal{R}$ as the orthogonal projection onto the subspace $\mathcal{R}$ spanned by $\xi$ and $\xi_1$.
	Then, $\operatorname{pr} (f(M^n)-f(p))$ is not contained in any line in the plane $\mathcal{R}$.
	Hence, there exist two points $q_1$ and $q_2$ in $M^n$ 
	such that two non-zero vectors $\operatorname{pr} (f(q_1) - f(p))$ and $\operatorname{pr} (f(q_2) - f(p))$ are not parallel.
	Here, we show that there exists $\theta_3$ such that $\rho(\theta_3) \neq 0$ and 
	$f(q_1)$ and $f(q_2)$ lie on different sides of the hyperplane $H_{\theta_3}$. 
	We set a function 
	$$\delta(\theta) := ((f(q_1) - f(p)) \cdot \xi_\theta)((f(q_2) - f(p)) \cdot \xi_\theta).$$
	In this setting,
	two points $f(q_1)$ and $f(q_2)$ lie on different sides of the hyperplane $H_\theta$ 
	if and only if $\delta(\theta) < 0$.
	We denote by $\vect{e}_1$ and $\vect{e}_2$ the unit vectors defined by
	$$
	\vect{e}_1  := \frac{\operatorname{pr}(f(q_1) - f(p))}{\|\operatorname{pr}(f(q_1) - f(p))\|}, \quad
	\vect{e}_2  := \frac{\operatorname{pr}(f(q_2) - f(p))}{\|\operatorname{pr}(f(q_2) - f(p))\|}.
	$$
	Then, we write $\vect{e}_1$ and $\vect{e}_2$ as
	$$
	\vect{e}_1 = \cos \alpha \, \xi + \sin \alpha \, \xi_1, \quad
	\vect{e}_2 = \cos \beta \, \xi + \sin \beta \, \xi_1 \quad (\alpha, \beta \in [0,2\pi)).
	$$
	As a result, $\delta(\theta)$ can be written as
	$$
	\delta(\theta) = \|\operatorname{pr}(f(q_1) - f(p))\| \|\operatorname{pr}(f(q_2) - f(p))\| 
	\cos (\theta - \alpha) \cos (\theta - \beta).
	$$
	Since $\vect{e}_1$ and $\vect{e}_2$ are not parallel, $\alpha - \beta \neq 0,\pm \pi$.
	Thus, the set $\{ \theta \in [0,2\pi) \mid \delta(\theta) < 0 \}$ is a non-empty open subset.
	On the other hand, for $\rho(\theta)$, 
	we have 
	$$
	\rho(\theta) = \det A_{\cos \theta \, \xi + \sin \theta \, \xi_1} = \det( \cos \theta \, A_{\xi} + \sin \theta \, A_{\xi_1} ). 
	$$ 
	Hence, $\rho(\theta)$ is a polynomial of $\cos \theta$ and $\sin \theta$.
	Namely, $\rho(\theta)$ is a real analytic function of $\theta$.
	Since $\rho(\theta)$ does not vanish identically, 
	the set $\{ \theta \in [0,2\pi) \mid \rho(\theta) \neq 0 \}$ is dense in $[0,2\pi)$.
	Therefore, the set $\{ \theta \in [0,2\pi) \mid \delta(\theta) < 0, \rho(\theta) \neq 0 \}$ is not empty.
	We choose $\theta_3$ from the set.
	Next, we set a function $\Delta: B_{\rm reg} \to \R$ by 
	$$
	\Delta(p',\xi')  := ((f(q_1) - f(p')) \cdot \xi')((f(q_2) - f(p')) \cdot \xi').
	$$
	Since $\Delta(p,\xi_{\theta_3}) < 0$, 
	there exists an open neighborhood $W$ of $(p,\xi_{\theta_3})$ in $B_{\rm reg}$ 
	such that $\Delta(p',\xi') < 0$ holds for each $(p',\xi') \in W$.
	By the inverse function theorem, 
	by choosing $W$ sufficiently small if necessary
	the map $\nu |_W: W \to \nu(W)$ is a diffeomorphism onto its image.
	For $\xi'\in \nu(W)\cap Q^c$, 
	let $(p',\xi')\in W$ be the point with $\nu(p',\xi')=\xi'$. 
	Then $p'$ is a Morse critical point of $h_{\xi'}$.
	Moreover, since $f(q_1)$ and $f(q_2)$ lie on different sides of the hyperplane
	through $f(p')$ perpendicular to $\xi'$, the value $h_{\xi'}(p')$ is neither
	the maximum nor the minimum of $h_{\xi'}$ on $M^n$.
	Thus $h_{\xi'}$ has at least three critical points.
	Since $\nu(W)$ has positive measure in $S^{n+r-1}$,
	the total absolute curvature $\tau(M^n,f)$ is greater than $2$ 
	 (cf.\ \eqref{eq:ave-tau} ).
	This contradicts $\tau(M^n,f) = 2$.

Thus, $G(p,\xi) $ is identically zero on $B_{\rm reg}$.
Then, we obtain
$$
 \int_{B } |G|\, d \mu_B = 0,\quad
 \frac{1}{\operatorname{vol} (S^{n+r-1})}
\int_{\bar{B} } |\bar{G}|\, d \mu_{\bar{B}} = 2.
$$
Hence, there exists $(q,\eta) \in \bar{B}$ such that $\bar{G}(q,\eta) \neq 0$.
Then we can prove that $f(\Sigma_f)$  is contained in
an $(n+r-1)$-dimensional affine subspace $\mathcal{P}^{n+r-1}$ of $\R^{n+r}$ 
 by an argument similar to the one above (cf.\ \cite[Lemma~1]{CL1}). 
Indeed, if $f(\Sigma_f)$ is not contained in any $(n+r-1)$-dimensional affine subspace of $\R^{n+r}$, 
then there would exist an open subset $W'$ of $\bar{B}$ such that  
the height function $h_{\eta'}$ has at least three critical points for each $(q',\eta') \in W'$, 
which leads to a contradiction.
Therefore, $f(\Sigma_f)$  is contained in an $(n+r-1)$-dimensional affine subspace $\mathcal{P}^{n+r-1}$ of $\R^{n+r}$.
As a result, it suffices to show that $f(M^n_{\rm{reg}})$ belongs to $\mathcal{P}^{n+r-1}$.

In order to show this, we assume that $f(M^n_{\rm{reg}})$ is not contained in $\mathcal{P}^{n+r-1}$, leading to a contradiction.
By a parallel translation if necessary, we may assume that $\mathcal{P}^{n+r-1}$ passes through the origin.
Then there exists a unit vector $\vect{a} \in S^{n+r-1}$ such that
$
\mathcal{P}^{n+r-1} = \{ \vect{v} \in \R^{n+r} \mid \vect{v} \cdot \vect{a}  =0 \}.
$
Since the image $f(M^n_{\rm{reg}})$ is not contained in $\mathcal{P}^{n+r-1}$, 
there exists $q_0 \in M^n_{\rm{reg}}$ such that $h_{\vect{a}} (q_0) = f(q_0) \cdot \vect{a}$ is greater than zero.
For $\vect{b} \in S^{n+r-1}$ such that  $\vect{a}$ and $\vect{b}$ are linearly independent,
we obtain 
$$
h_{\vect{b}} (q_0) = f(q_0) \cdot \vect{b} = f(q_0) \cdot (\vect{b}-\vect{a}) + f(q_0) \cdot \vect{a}.
$$
Since $f(\Sigma_f) \subset \mathcal{P}^{n+r-1}$, we have $f(p) \cdot \vect{a} = 0$ for each $p \in \Sigma_f$.
Hence, 
$$
h_{\vect{b}} (p) = f(p) \cdot \vect{b} =
f(p) \cdot (\vect{b}-\vect{a}) \quad (p \in \Sigma_f)
$$
holds. 
Therefore, we have
$$
h_{\vect{b}} (q_0) - h_{\vect{b}} (p) = f(q_0) \cdot \vect{a} - (f(p)-f(q_0)) \cdot (\vect{b}-\vect{a}) \quad (p \in \Sigma_f).
$$
By the Cauchy-Schwarz inequality, it holds that
$$
f(q_0) \cdot \vect{a} - (f(p)-f(q_0)) \cdot (\vect{b}-\vect{a}) \geq f(q_0) \cdot \vect{a} - \| f(p)-f(q_0) \|  \| \vect{b}-\vect{a} \|  \quad (p \in \Sigma_f).
$$
If
$
f(q_0) \cdot \vect{a} - \| f(p)-f(q_0) \|  \| \vect{b}-\vect{a} \|  
$
is greater than zero, 
we have $h_{\vect{b}} (q_0) > h_{\vect{b}} (p)$.
Now, we define a function $\varphi$ on 
$\Sigma_f$ by $\varphi(p) :=\| f(p)-f(q_0) \|$.
Since $f(q_0)\notin\mathcal P^{n+r-1}$ while
$f(\Sigma_f)\subset\mathcal P^{n+r-1}$, we have
$\varphi(p)>0$ for all $p\in\Sigma_f$.
We denote by $m\, (>0)$ the maximum value of $\varphi$.
We set $Z \subset S^{n+r-1}$ as
$$
Z  := \left\{ \vect{b} \in S^{n+r-1} \,\middle|\, \frac{f(q_0) \cdot \vect{a}}{m} > \| \vect{b}-\vect{a} \| \right\}.
$$
Then,  for each $\vect{b} \in Z$, there exists a maximum point of the height function $h_{\vect{b}} $ on $M^n_{\rm{reg}}$.
We denote by $Q^c = S^{n+r-1} \setminus Q$ the complement of $Q$.
Since $Z$ is a non-empty open subset of $S^{n+r-1}$ and $Q$ has measure zero,
we have $Z\cap Q^c\neq\emptyset$.
For a given $\vect{b}_0 \in Z \cap Q^c$, 
we let $q_{\rm{max}} \in M^n_{\rm{reg}}$ be a point which attains the maximum of $h_{\vect{b}_0}$. 
Then the maximum point is a Morse critical point, that is, $\vect{b}_0$ is a regular value of $\nu$.
In other words, $G(q_{\rm{max}} , \vect{b}_0)$ is not zero.
This contradicts the assumption that $G(p,\xi) = 0$ for each $(p,\xi) \in B_{\rm{reg}}$.
As a result, the image $f(M^n)$  is contained in 
an $(n+r-1)$-dimensional affine subspace of $\R^{n+r}$.

We denote by $f':M^n \to \mathcal{P}^{n+r-1}$ the restriction of $f$ to $\mathcal{P}^{n+r-1}$.
By parallel translation if necessary, we may assume that $\mathcal{P}^{n+r-1}$ coincides with $(n+r-1)$-dimensional Euclidean space $\R^{n+r-1}$.
Then, 
$$
\tau(M^n,f') = \frac{1}{\operatorname{vol} (S^{n+r-2})} \int_{S^{n+r-2} \setminus Q'} \# \operatorname{crit} (h_{\vect{w}},M^n)\, d \mu_{S^{n+r-2}}
$$
holds by \eqref{eq:ave-tau}.
Here, \(Q'\) is the subset of the unit sphere $S^{n+r-2}$ defined as in \eqref{eq:setQ}
for the frontal \(f'\).
Let $\xi_0$ be a unit vector such that $\xi_0$ is perpendicular to $\R^{n+r-1}$.
Then, for almost every $\vect{w} \in S^{n+r-2} \setminus Q'$, 
there exists $s \neq 0$ and $t$ such that $\vect{v} = s\, \vect{w} + t\, \xi_0 \in S^{n+r-1} \setminus Q$.
Since $\tau(M^n,f) = 2$, the height function $h_{\vect{v}}$ has exactly two critical points.
On the other hand, by $h_{\vect{v}} = f \cdot (s\, \vect{w} + t\, \xi_0)$,
we have $h_{\vect{v}} = s h_{\vect{w}} + t h_{\xi_0}$.
Furthermore, because $f'(M^n) \subset \R^{n+r-1}$, the height function $h_{\xi_0}$ vanishes identically, so $h_{\vect{v}} = s h_{\vect{w}}$ holds.
Thus, the height function $h_{\vect{v}}$ has the same critical points as $h_{\vect{w}}$.
Therefore, the height function $h_{\vect{w}}$ has exactly two critical points for almost every $\vect{w} \in S^{n+r-2} \setminus Q'$,
in other words, $\tau(M^n,f') = 2$.

As a result,
viewing $f$ as a frontal in the affine subspace $\mathcal{P}^{n+r-1}$, 
the total absolute curvature is unchanged 
under such a reduction of the codimension.
By repeating the process inductively, we can show that $f(M^n)$  is contained in 
an $(n+1)$-dimensional affine subspace.

\medskip
\begin{remark}\label{rem:n-subspace}
Under the assumption of $\tau(M^n,f) = 2$,
one cannot in general conclude that $f(M^n)$
is contained in an $n$-dimensional affine subspace. 
The argument above reduces the
ambient dimension by using
$\xi_\theta$
defined in a two-dimensional subspace of the normal space. 
However, after the dimension has been
reduced to $(n+1)$, 
the normal space is one-dimensional, 
so such a vector $\xi_\theta$ is no longer available. 
Hence the argument cannot be used to show that $G$ vanishes identically on $B_{\rm reg}$, 
and the dimension reduction stops at $(n+1)$.
\end{remark}

\medskip
\begin{acknowledgements}
The author is deeply grateful to Professor Atsufumi Honda, 
his advisor, for the invaluable mentorship, advice, encouragement, support and leading him to this result.
He would also like to thank Professors Kazuyuki Enomoto, Masaaki Umehara, and Kotaro Yamada for their insightful comments and communications, which have been instrumental in shaping this work.
He is also grateful to the anonymous reviewer for the constructive feedback that helped improve the paper.
This work was supported in part by JST SPRING, Grant Number JPMJSP2178.
\end{acknowledgements}

%
%
%
%

\end{document}